\documentclass[a4paper,10pt]{amsart}

\usepackage[utf8]{inputenc}
\usepackage[usenames, dvipsnames]{color}
\newtheorem{theorem}{Theorem}
\newtheorem{definition}[theorem]{Definition}
\newtheorem{lemma}[theorem]{Lemma}
\newtheorem{corollary}[theorem]{Corollary}

\newtheorem{remark}[theorem]{Remark}
\newtheorem{conjecture}[theorem]{Conjecture}
\newcommand{\C}{\mathbb C}
\newcommand{\R}{\mathbb R}
\newcommand{\N}{\mathbb N}

\title[Approximation in measure]{Approximation in measure: Dirichlet problem, universality and the Riemann hypothesis}

\dedicatory{To Anatoly Georgevich from the junior author, to Tolya from the senior author}

\author[J. Falc\'o]{Javier Falc\'o}
\address[Javier Falc\'o]{Departamento de An\'alisis Matem\'atico, Universidad de Valencia, Doctor Moliner 50, 46100 Burjasot (Valencia),
Spain.}
\email{Francisco.J.Falco@uv.es}

\author[P. M. Gauthier]{Paul M. Gauthier}
\address[Paul M. Gauthier]{D\'epartement de math\'ematiques et de statistique, Universit\'e de Montr\'eal, Montr\'eal, Qu\'ebec, Canada H3C3J7.}
\email{gauthier@dms.umontreal.ca}

\address{}
\email{}

\begin{document}

\begin{abstract}
Approximation in measure is employed to solve an asymptotic Dirichlet problem on arbitrary open sets and to show that many functions, including the Riemann zeta-function, are universal in measure. Connections with the Riemann Hypothesis are suggested.
\end{abstract}

\keywords{harmonic approximation in measure, harmonic, holomorphic, Dirichlet problem, Riemann zeta-function, universality} \subjclass{Primary:30K99, 30E10. Secondary:31C12, 11M06}

\thanks{First author was supported by MINECO and FEDER Project MTM2017-83262-C2-1-P. Second author was supported by NSERC (Canada) grant RGPIN-2016-04107.}

\maketitle

\section{Introduction}

In his beautiful survey on universality, Karl-Goswin Grosse-Erdman \cite[p. 263]{G-E99} remarks that
the notion of universality in measure was already implicit in the work of Mens'shov in 1945 on universal trigonometric series. Recently, approximation in measure by holomorphic and harmonic functions was considered in \cite{GS17} and \cite{GS19}. In the present paper, we show the existence and preponderance of holomorphic and harmonic functions which are universal with respect to approximation in measure. 

We employ approximation in measure to solve a Dirichlet type problem for harmonic functions on arbitrary open subsets of Riemannian manifolds. The Dirichlet type problem we solve is a Dirichlet problem {\em in measure}, not with respect to the measure on the boundary, which is quite classical, but rather with respect to the  measure on the open set in the sense of Definition \ref{boundarydensity}. Similar results where obtained in \cite{FalcoGauthier} for holomorphic functions on arbitrary open sets of Riemann surfaces.

We also study universality in measure for harmonic functions on $\mathbb R^n$ and universality in measure for Dirichlet series on the complex plane.

On every Riemannian manifold $M$ there is a natural volume measure and we denote the volume of a Borel subset $A$ of $M$ by $\mu(A).$ We denote a ball of center $p$ and radius $r$ by $B(p,r).$ For a locally compact Hausdorff space $X,$ we denote the ideal point of the one-point compactification by $*_X$ or, if there is no ambiguity, simply by $*.$ 
A subset of an open set $U$ is said to be bounded in $U$ if its closure in $U$ is compact. 
By a regular exhaustion of $X,$ we mean a sequence $\{X_n\}$ of compact subsets of $X,$ such that, for each $n,$ $X_n\subset X_{n+1}^0$ and $X^*\setminus X_n$ is connected.

For a subset $F\subset M,$ we denote by $H(F)$ the family of all functions $u$ which are harmonic on some (depending on $u$) neighbourhood of $F.$

\begin{definition}
\label{boundarydensity}
For a boundary point $p$ of an open  subset $U\subset M$  and a measurable set $A\subset U,$ we define the {\em density} of $A$ relative to $U$ at $p$ as 
$$
	\mu_U(A,p) = \lim_{r\searrow 0}\frac{\mu\big(A\cap B(p,r)\big)}{\mu\big(U\cap B(p,r)\big)}
$$
 if this limit exists.
\end{definition} 

Obviously, 
$$
	\mu_U(A,p)=0 \quad \mbox{if and only if} \quad  \mu_U(U\setminus A,p)=1.
$$

%
%
%
%

As we mentioned before, our main goal in this note is to solve a Dirichlet type problem in measure in the sense of the previous definition.

The structure of this manuscript is as follows. In Section \ref{AsymptoticDirichletproblem} we will prove our asymptotic Dirichlet result in the sense of Definition \ref{boundarydensity}. To be more specific, we will show that fixed an open subset of a Riemannian manifold and   a continuous function on the boundary of the set, there exists a  harmonic (or holomorphic) function defined on the set that approaches the original function except on a set of density 0 at every boundary point. In Section \ref{universalityinmeasure} we study harmonic universality in measure in the sense of Birkhoff. Our last section is devoted to the study of universality in measure for functions  that  admit a representation as a Dirichlet series in some half-plane. In particular we obtain  a universality theorem in measure for the Riemann’s zeta-function.
Andersson \cite{A13} has recently shown the remarkable fact that a certain refinement of Voronins's spectacular Theorem on the universality of the Riemann zeta-function, would be equivalent to a natural refinement of the most important theorem in complex approximation, Mergelyan's theorem. The Selberg class $S$ is an attempt to axiomatize a natural class of Dirichlet series  for which an analogue of the Riemann-Hypothesis would hold. Steuding \cite{S} has introduced a class $\tilde S$ similar to the Selberg class and to which the Universality Theorem extends. We extend Andersson's equivalence theorem to functions in the intersection $S\cap\tilde S$  of the Selberg class and the Steuding class $\tilde S,$ which includes many Dirichlet series which one naturally encounters, including the Riemann zeta-function.



\section{Asymptotic Dirichlet problem}
\label{AsymptoticDirichletproblem}

Our main result in this section is Theorem \ref{DirichletTheorem}. That is the asymptotic Dirichlet result already mentioned in the introduction. To obtain this result we will use Runge-Carleman sets. We start by recalling the definition of these sets.

\begin{definition}
Let $F\subset U\subset M,$ where $M$ is a Riemannian manifold, $U$ is open and $F$ is closed relative to $U.$
The set $F$ is called a  Runge-Carleman set in $U,$ if  for every $u\in H(F)$  and every positive  continuous function $\epsilon$ on $F,$  there exists a harmonic function $h:U\mapsto \mathbb R$ such that $|h-u |<\epsilon$ on $F.$
\end{definition}

Our first result is an observation that extends \cite[Lemma 13]{GS19}. This lemma gives a sufficient condition for a set to be a Runge-Carleman set in terms of local connectivity and local finiteness. A set is called locally connected if every point admits a neighbourhood basis consisting entirely of open, connected sets. A collection of sets $E_j$ on a Riemannian manifold $M$  is called locally finite if  every compact set $K$ in $M$ intersects only finitely many of the sets $E_j$.

\begin{theorem}\label{Lemma 13}
Let $F\subset U\subset M,$ where $M$ is a Riemannian manifold, $U$ is open and $F$ is closed relative to $U$. Assume $U^*\setminus F$ is connected and locally connected and $F$ is the union of a  family of disjoint compact sets, which in $U$ is locally finite. Then, $F$ is a  Runge-Carleman set in $U.$
\end{theorem}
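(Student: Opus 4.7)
The plan is to run the classical iterative Runge-type scheme used in Carleman-type harmonic approximation, with the structural hypotheses on $F$ used to verify the Runge condition at each stage of an exhaustion of $U$.

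First I would fix a regular exhaustion $\{U_n\}$ of $U$ and then, using the local finiteness of $\{K_j\}$ (which says only finitely many $K_j$ meet each compact $U_n$), adjust the exhaustion so that every $K_j$ is either contained in some $U_n^0$ or disjoint from it. Set $F_n := F \cap U_n$. Then $F_n$ is a finite union of $K_j$'s, hence compact, and the ``old'' piece $F_{n-1}$ and the ``new'' piece $F_n\setminus F_{n-1}$ lie at strictly positive distance, being disjoint compact sets.

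Each such $F_n$ is a Runge set for harmonic functions in $U$: since $F\subset U$, De Morgan yields
\[
U^*\setminus F_n = (U^*\setminus F) \cup (U^*\setminus U_n),
\]
a union of two connected sets (by hypothesis and by the regular exhaustion property) sharing the ideal point $*$, hence connected. For the inductive step of the construction to give locally uniform convergence on $U$, one needs the Runge property also for enlarged compacta $F_n \cup L_n$, where $\{L_n\}$ is a compact exhaustion of $U\setminus F$ respecting its components; here the local connectivity of $U^*\setminus F$ is essential to keep $U^*\setminus(F_n\cup L_n)$ connected.

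Given $u\in H(F)$ and a positive continuous $\epsilon$ on $F$, I would then iterate: choose $\eta_n>0$ with $\sum_{k\ge n}\eta_k<\min_{F_n}\epsilon$, and inductively pick harmonic $h_n$ on $U$ that approximates, within $\eta_n$ on $F_n\cup L_{n-1}$, the function $v_n$ obtained by fusing $h_{n-1}$ on a neighbourhood of $F_{n-1}\cup L_{n-1}$ with $u$ on a neighbourhood of $F_n\setminus F_{n-1}$ (legitimate by the positive separation arranged above). Telescoping gives $|h_n(x) - u(x)| \le \sum_{k\ge N(x)}\eta_k < \epsilon(x)$ for $x\in F$, where $N(x)$ is the first index for which $x\in F_{N(x)}$, while the bound on $L_{n-1}$ makes $\{h_n\}$ locally uniformly Cauchy on $U$. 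The limit $h$ is harmonic on $U$ and satisfies $|h-u|<\epsilon$ on $F$.

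The main obstacle is the enlarged Runge step: verifying that $U^*\setminus(F_n\cup L_n)$ stays connected requires both the local finiteness of $\{K_j\}$ (to manage the exhaustion) and the local connectivity of $U^*\setminus F$ (to control components of $U\setminus F$ cut off by $L_n$). Once this topological step is handled, the remaining error bookkeeping is the familiar Carleman telescoping argument.
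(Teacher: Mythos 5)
Your strategy --- the classical Carleman iteration over an exhaustion, with a Runge step on enlarged compacta $F_n\cup L_n$ --- is a reasonable one in principle, but it is not the paper's argument, and as written it has a genuine gap exactly where the hypotheses of the theorem do their work. The paper's proof is a two-line reduction: for $U=M$ the statement \emph{is} Lemma 13 of Gauthier--Sharifi \cite{GS19}, and the general case follows by applying that lemma to each connected component of $U$, each component being itself a Riemannian manifold. What you are attempting is essentially a re-proof of the cited lemma.

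The gap: you correctly identify that the whole construction hinges on showing that $U^*\setminus(F_n\cup L_n)$ remains connected for a suitable choice of the compacta $L_n\subset U\setminus F$, and that this is where local connectivity of $U^*\setminus F$ must enter --- but you then write ``once this topological step is handled'' and never handle it. This is not bookkeeping; it is the entire content of the lemma. A careless choice of $L_n$ (for instance one that surrounds components $K_j$ of $F$ not yet absorbed into $F_n$) disconnects the complement and destroys the Runge property, and showing that connectivity and local connectivity of $U^*\setminus F$ at $*_U$ permit a choice of the $L_n$ avoiding this --- together with the fusion of $h_{n-1}$ and $u$ across the positive gap between old and new pieces --- is precisely the work done in \cite{GS19}. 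Two smaller points also need care: (i) adjusting the exhaustion so that each $K_j$ is contained in or disjoint from each $U_n$ must be done so that the modified $U_n$ remain compact with $U^*\setminus U_n$ connected (swallowing finitely many $K_j$ can create holes that must be filled); (ii) the Runge step on a compact set with connected complement is not elementary for harmonic functions on a Riemannian manifold and itself requires the localization machinery of \cite{BGW96}. As it stands, your write-up is a plausible outline of the proof of the result the paper simply cites, with its hardest step left as an assertion.
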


\begin{proof}
If $U=M,$ this is  \cite[Lemma 13]{GS19}. The general case follows by applying Lemma 13 to each connected component of $U,$ since each connected component is a Riemannian manifold.  
\end{proof}



Our next result is a geometric lemma that combined with Theorem \ref{Lemma 13} will be crucial in the proof of Theorem \ref{DirichletTheorem}

\begin{lemma}
\label{smallremoval}
Let $U$ be a proper open subset of a Riemannian  manifold $M$ and $s$ a  connected  compact subset of $U$ of zero volume. Then, for each   $\epsilon>0$  and $r>0,$ there is a connected open neighbourhood $R$ of $s$ in $U$ such that 
\[
	\frac{\mu\big(R\cap  B(p,r)\big)}{\mu\big(U\cap B(p,r)\big)} < \epsilon, 
\]
for every $p\in \partial U$.\\

\end{lemma}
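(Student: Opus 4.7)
The plan is to take $R$ to be a thin tubular neighbourhood of $s$, namely
\[
R_\eta := \{x \in M : d(x, s) < \eta\},
\]
for $\eta \in (0, d)$ small, where $d := d(s, \partial U)$. Since $s$ is compact and lies in the open set $U$, the quantity $d$ is strictly positive, so $R_\eta \subset U$. Connectedness of $R_\eta$ follows from connectedness of $s$: every point of $R_\eta$ is joined to its nearest point of $s$ by a minimising curve of length less than $\eta$, which stays inside $R_\eta$. Moreover $\bigcap_{\eta > 0} R_\eta = s$ has zero volume and $R_{\eta_0}$ has finite volume for small $\eta_0$, so monotone convergence yields $\mu(R_\eta) \to 0$ as $\eta \to 0^+$.

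Next, I would split the boundary points according to their distance from $s$. For $p \in \partial U$ with $d(p, s) \geq r + \eta$, one has $B(p, r) \cap R_\eta = \emptyset$, making the ratio trivially zero. The delicate case is $p \in \partial U$ with $d(p, s) < r + \eta$. Observe that for every $x \in s$, $d(p, x) \geq d(x, \partial U) \geq d$, so in fact $d(p, s) \in [d, r + \eta)$ on this set. The main task is to establish a uniform positive lower bound $\mu(U \cap B(p, r)) \geq m$ across this set of $p$, after which choosing $\eta$ so that $\mu(R_\eta) < \epsilon m$ will finish the proof via $\mu(R_\eta \cap B(p, r))/\mu(U \cap B(p, r)) \leq \mu(R_\eta)/m < \epsilon$.

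To obtain such a bound, I would argue geometrically. Let $x_0 \in s$ realize $L := d(p, s) = d(x_0, p)$ and take a nearly minimising curve $\gamma$ from $x_0$ to $p$. Along $\gamma$, the triangle inequality gives $d(\gamma(t), \partial U) \geq d - t > 0$ for $t \in [0, d)$, so $\gamma(t) \in U$ and $B(\gamma(t), d - t) \subset U$. Setting $t^* := \max\{0, (d - r + L)/2\}$ produces a point $y = \gamma(t^*)$ with $d(y, s) \leq (d + \eta)/2$ and a ball $B(y, \rho_*) \subset U \cap B(p, r)$ of radius $\rho_* \geq (d - \eta)/2 > 0$. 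The centre $y$ then ranges over the closed bounded set $N := \{z \in M : d(z, s) \leq (d + \eta)/2\} \subset U$, on which standard local Riemannian volume estimates supply a uniform lower bound $\mu(B(y, \rho_*)) \geq m > 0$. The principal obstacle is this uniform lower bound: it rests on compactness (in $M$) of $N$ and the local comparability of Riemannian balls to Euclidean balls there, and care must be taken in the subcase where $L$ approaches $r + \eta$ so that $\rho_*$ does not degenerate. An alternative route is to appeal to lower semicontinuity of $p \mapsto \mu(U \cap B(p, r))$ on the closed bounded subset $\{p \in \partial U : d(p, s) \leq r + \eta\}$ of $M$, which is compact under the mild regularity of $M$ implicit in the paper's setting.
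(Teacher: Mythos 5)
Your argument is correct and follows essentially the same strategy as the paper's: shrink a connected tube around $s$ until its measure is below $\epsilon$ times a uniform positive lower bound for $\mu\big(U\cap B(p,r)\big)$, that lower bound being obtained by fitting a ball of definite radius, centred at a point of a compact subset of $U$ reached along a near-minimising path toward $p$, inside $U\cap B(p,r)$, and then invoking compactness. The only real difference is in the execution of the lower bound --- the paper uses the infimum $V(h)$ of $\mu\big(B(\alpha,h)\cap B(\beta,h)\big)$ over pairs at distance $h$, while you use a single ball $B(y,\rho_*)$ with $y$ ranging over a compact tube around $s$ --- which is a slightly cleaner route to the same estimate, resting on the same implicit properness assumptions on $M$ that the paper's own compactness argument also uses.
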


\begin{proof}

Consider a compact set $s'$ in $U$ such that $s$ is contained in the interior of $s'$. Since $s'$ is a compact subset of $U$ we can find a positive number $h$ with $d(s',\partial U)>2h>0$  and hence $\overline{B(x,4h/3)}$ compact for all $x\in s'$. Let 
\[
	V(h):=\inf\{\mu\big(B(\alpha,h)\cap B(\beta,h)\big):\alpha\in s', \beta\in U, d(\alpha,\beta)=h\}.
\]
We claim that $V(h)>0$. 
Indeed, if $V(h)=0$ we could find sequences of points $\{\alpha_{n}\}_{n}$ and $\{\beta_{n}\}_{n}$ such that $\mu\big(B(\alpha_{n},h)\cap B(\beta_{n},h)\big)\to 0$ and $d(\alpha_{n}, \beta_{n})=h$. By the compactness of $s'$ we can assume that $\alpha_{n}\to \alpha\in s'$. Then, for every $n$ bigger than some natural number $n_{0}$ we have that $d(\alpha_{n},\alpha)<h/4$. Since $d(\alpha,\beta_n)\le d(\alpha,\alpha_n)+d(\alpha_n,\beta_n),$  
for all $n>n_{0}$ we have that  $\beta_{n}\in {B(\alpha, 5h/4)}\subset U$. By compactness  of $\overline{B(\alpha, 4h/3)}$ we can assume that $\beta_{n}\to \beta\in  \overline{B(\alpha, 5h/4)}\subset  U$.  We may also assume that $d(\beta_n,\beta)<h/4.$ 
Now, there is a path $\sigma$ of length $\ell(\sigma)<4h/3$ in $M$ going from $\alpha$ to $\beta.$ Since $d(s^\prime,\partial U)>2h,$ the path $\sigma$ lies in $U.$   
As a point $\gamma$ moves along this path, we write $\sigma=\sigma_{<\gamma}+\sigma_{>\gamma},$ where $\sigma_{<\gamma}$ is the portion of $\sigma$ from $\alpha$ to $\gamma$ and $\sigma_{>\gamma}$ is the portion of $\sigma$ from $\gamma$ to $\beta.$ 
$\ell(\sigma_{<\gamma})+\ell(\sigma_{>\gamma})=\ell(\sigma)<4h/3,$ The function $\ell(\sigma_{<\gamma})- \ell(\sigma_{>\gamma})$ depending on $\gamma$ has a zero for some $\gamma$  because it is continuous and for $\gamma=\alpha$ is negative while for $\gamma=\beta$ it  is positive.
This point $\gamma$ cuts the path into two paths of equal length, so $d(\alpha,\gamma)<2h/3$ and $d(\gamma,\beta)<2h/3.$ 
By the triangle inequality we have that for all $n>n_{0}$
\[
	B(\gamma,h/12)\subset B(\alpha_{n},h)\cap B(\beta_{n},h).
\]
But $\mu\big(B(\alpha_{n},h)\cap B(\beta_{n},h)\big)\to 0$ when $n$ goes to infinity and $\mu\big(B(\gamma,h/12)\big)>0$ (since the Riemannian volume of every open set is positive),  which is a contradiction. Therefore $V(h)>0$.

 Since the measure  $\mu$ is regular, we may choose $R$  to be a connected open neighbourhood of $s$ in $U$ that is contained in an open $\epsilon$-neighbourhood of $s$  contained in $s'$ and with measure smaller than $\epsilon V(h),$ where $\epsilon< d(s',\partial U)-2h$. 
Note that $d(R,\partial U)>2h$. We claim that such a set satisfies the result.

Fix a point $p\in \partial U$ and $r>0$. If $R\cap  B(p,r)=\emptyset,$ then the result holds automatically, so we can assume that $R\cap  B(p,r)\ne \emptyset$.  Thus, $r>2h.$ Fix $x\in R\cap  B(p,r)$. We can find a path $\sigma$ in $M$ joining $x$ and $p$  with $2h<\ell(\sigma)<r.$ Then, we can find a 
point $y$ in the path such that  $d(x,y)=h$. Since $U$ is open and $d(x,\partial U)\geq d(R,\partial U)>2h$ we have that $y$ is in $U$. Also, we have that $d(y,p)< \ell(\sigma)-d(x,y) < r-h$. Thus, $B(y,h)\subset B(p,r)$.
 
 Note that, since $d(R,\partial U)>2h$, we have that  $B(x,h)\subset U$. Using also that  $B(y,h)\subset B(p,r)$, $x\in s'$, $d(x,y)=h$ and the definition of $V(h)$, we obtain
\[
	\mu\big(U\cap B(p,r)\big)
\geq \mu\big(B(x,h)\cap B(p,r)\big) 
\geq \mu\big(B(x,h)\cap B(y,h)\big) 
\geq V(h). 
\]

Hence,
\[
	\frac{\mu\big(R\cap  B(p,r)\big)}{\mu\big(U\cap B(p,r)\big)} 
	\leq \frac{\mu\big(R\cap  B(p,r)\big)}{V(h)} 
	\leq \frac{\mu\big(R\big)}{V(h)} 
	< \frac{\epsilon V(h)}{V(h)}
	=\epsilon.
\]

\end{proof}

Now we present our main result in this section.

\begin{theorem}
\label{DirichletTheorem}
Let $U$ be an open subset of a Riemannian manifold $M$ and $\varphi$  a continuous function on $\partial U.$  Then, there exists a  harmonic function $\widetilde\varphi$ on $U,$  such that, for every $p\in \partial U$, $\widetilde\varphi(x)\to \varphi(p)$, as $x\to p$ outside a set of density 0 at $p$ relative to $U.$  
\end{theorem}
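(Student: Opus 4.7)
My plan is to build a Runge--Carleman set $F\subset U$ whose complement in $U$ has density $0$ at every boundary point, to define on $F$ a locally constant function $\psi$ whose boundary limits match $\varphi$, and then to apply Theorem~\ref{Lemma 13} to replace $\psi$ by a harmonic function on $U$. I will first extend $\varphi$ continuously to $\Phi\colon M\to\mathbb{R}$ via the Tietze extension theorem.

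For the construction of $F$, I fix a regular exhaustion $\{K_n\}$ of $U$. Inside each shell $K_{n+1}\setminus K_n^0$ I choose a finite family $\{s_{n,j}\}_j$ of connected compact subsets of $U$ of zero Riemannian volume, subdividing the shell into cells of diameter at most $2^{-n}$, together with a compact arc $\sigma_n$ in the shell whose endpoints are shared with those of $\sigma_{n-1}$ and $\sigma_{n+1}$, so that $\bigcup_n\sigma_n$ is connected and unbounded in $U$. By Lemma~\ref{smallremoval}, I choose for each $s_{n,j}$ a connected open neighbourhood $R_{n,j}\subset U$ with tolerance $\epsilon_{n,j}$ (valid for every $p\in\partial U$ and $r>0$), where $\sum_j\epsilon_{n,j}\le 2^{-n}$, and I arrange the $R_{n,j}$'s so that they overlap consecutively along the tubes around the $\sigma_n$'s, making $R=\bigcup_{n,j}R_{n,j}$ a connected open set. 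I then set $F=U\setminus R$. The set $F$ is a locally finite disjoint union of compact cells, and since $R$ is connected and extends into arbitrarily deep shells, $U^{*}\setminus F=R\cup\{*_U\}$ is connected and locally connected. Because $d(s_{n,j},\partial U)\ge d(K_{n+1},\partial U)\to 0$, only shells satisfying $d(K_{n+1},\partial U)<2r$ can contribute to $R\cap B(p,r)$ when $p\in\partial U$, so
\[
\frac{\mu(R\cap B(p,r))}{\mu(U\cap B(p,r))}\le \sum_{n\,:\,d(K_{n+1},\partial U)<2r} 2^{-n}\longrightarrow 0
\]
as $r\to 0$, giving density $0$ at every boundary point.

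Next, for each compact cell $F_j$ of $F$, lying in the shell of index $n_j$, I pick $q_j\in F_j$, set $c_j=\Phi(q_j)$, and on pairwise disjoint open neighbourhoods $V_j\subset U$ of $F_j$ I define $\psi\equiv c_j$. Then $\psi$ is locally constant on an open neighbourhood of $F$, hence $\psi\in H(F)$. If $x\to p\in\partial U$ within $F$, then $x$ lies in some $F_{j(x)}$ with $n_{j(x)}\to\infty$, so $\operatorname{diam}(F_{j(x)})\le 2^{-n_{j(x)}}\to 0$, hence $q_{j(x)}\to p$ and $\psi(x)\to\varphi(p)$ by continuity of $\Phi$. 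I then pick a positive continuous $\eta$ on $F$ with $\eta(x)\to 0$ as $d(x,\partial U)\to 0$, and apply Theorem~\ref{Lemma 13} to produce a harmonic $\widetilde\varphi\colon U\to\mathbb{R}$ with $|\widetilde\varphi-\psi|<\eta$ on $F$. Then $|\widetilde\varphi(x)-\varphi(p)|\le\eta(x)+|\psi(x)-\varphi(p)|\to 0$ as $x\to p$ within $F$, while $U\setminus F=R$ has density $0$ at $p$.

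The main obstacle will be the geometric combinatorics underlying the construction of $F$: arranging the zero-volume separators so that the family is locally finite, the residual cells are compact of controlled diameter, the thin neighbourhoods combine to a single connected unbounded open subset $R$ (so that $U^{*}\setminus F$ is connected and locally connected, as required by Theorem~\ref{Lemma 13}), and the tolerances sum shell by shell to give density $0$ at every boundary point.
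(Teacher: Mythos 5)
Your overall strategy coincides with the paper's: remove from $U$ a union $R$ of thin connected open neighbourhoods of zero-volume separators, chosen via Lemma~\ref{smallremoval} with geometrically decaying tolerances so that $R$ has density $0$ at every boundary point; observe that $F=U\setminus R$ is a locally finite disjoint union of compact pieces whose diameters shrink near $\partial U$; put a locally constant function on $F$ with the correct boundary values; and invoke Theorem~\ref{Lemma 13}. The paper's separators are the boundary spheres of a locally finite cover of $U$ by small parametric balls (with $|S|<d(S,\partial U)$), rather than your exhaustion-shell cells plus spine arcs, but that difference is cosmetic; your density computation at $p\in\partial U$ and your limit argument for $\widetilde\varphi(x)\to\varphi(p)$ along $F$ are both sound and match the paper's.

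The genuine gap is the verification that $U^*\setminus F=R\cup\{*_U\}$ is \emph{locally} connected, in particular at the ideal point $*_U$. You assert this follows because ``$R$ is connected and extends into arbitrarily deep shells,'' but connectedness plus unboundedness does not give local connectivity at $*_U$: one must produce, inside every neighbourhood $(R\setminus L)\cup\{*_U\}$ of $*_U$ (with $L$ compact in $U$), a smaller \emph{connected} neighbourhood, i.e.\ show that after deleting a compact set the surviving far-out pieces of $R$ can all be rejoined to the unbounded part without re-entering $L$. This is precisely where the paper spends most of its effort (building a regular exhaustion $\{Q_n\}$ adapted to the balls and re-chaining the shells along the boundaries of the complementary components $W_i$ of $K_n$). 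In your setup the danger is concrete: a cell neighbourhood $R_{n,j}$ deep in shell $n$ might be linked to the spine only through tubes that pass back through the compact set being deleted, and if $U^*\setminus K_n$ has several components a single chain of arcs $\sigma_n$ cannot reach them all. You flag this as ``the main obstacle'' but do not resolve it; as written, the hypotheses of Theorem~\ref{Lemma 13} are not verified, and the whole construction rests on them.
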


\begin{proof}

If $U=M,$ then $\partial U=\emptyset$ so there is nothing to prove. On the other hand, if $U\not=M,$ then $\partial U\not=\emptyset$ (since $M$ is connected).

First, we extend $\varphi$ continuously to $\overline U$ and retain the same notation for the extension.

	Let $\mathcal S=\{S_j\}_{j=0}^{\infty}$ be a locally finite family of smoothly bounded compact parametric balls $S$ in $U$ such that $U=\cup_jS_j^0$ and $|S|<dist(S,\partial U),$ where $|S|$ denotes the diameter of $S$.  Without loss of generality we may assume that none of these balls contains another. Let $s_j=\partial S_j.$   By  Lemma \ref{smallremoval} there is an open neighbourhood $R_j$ of $s_j$ in $U$ such that 
$$
	\frac{\mu\big(R_j\cap  B(p,r)\big)}{\mu\big(U\cap B(p,r)\big)} < \frac{1}{2^j}, \quad \forall \, p\in  \partial U. 
$$ 
We may assume that each $R_j$ is a smoothly bounded shell. That is, that in a local coordinate system, $R_j=\{x: \rho_j<\|x\|<1\}.$ We may also assume that  $|R_j|<dist(R_j,\partial U)$ and  the family $\{R_j\}$ is locally finite (but note that the family is unbounded). Set $R=R_0\cup R_1\cup\cdots.$ Since no ball contains another ball, if two balls meet then their boundaries meet and consequently their shells meet.

We claim that $R\cup \{*_U\}$ is connected. Since $R$ is unbounded in $U$ it is sufficient to show that $R$ is connected. By a standard argument, for every two balls $S_i$ and $S_j$ in  $\mathcal S,$ there is a finite chain $S_{n_k}, \, k=1,\ldots, \ell$ in $\mathcal S,$ such that $S_{n_1}=S_i,$ $S_{n_\ell}=S_j,$ and
$\partial S_{n_k}\cap \partial S_{n_{k+1}}\not=\emptyset$, 
for $k=1,\ldots, \ell-1.$ This yields a corresponding chain of shells  $R_{n_k}$ from $R_i$ to $R_j.$ Thus, $R$ is indeed connected, so $R\cup \{*_U\}$ is connected. 

We claim that $R\cup \{*_U\}$ is also {\em locally} connected. Since $R$ is clearly locally connected, it is sufficient to show that $R\cup \{*_U\}$ is locally connected at the ideal point $*_U.$ For this it is sufficient to construct a neighbourhood base $\{V_n\}_{n=1}^\infty$ of $*_U$ in $R\cup \{*_U\}$  consisting of connected sets. Let $\{K_n\}_{n=0}^\infty$ be a regular exhaustion of $U$ by smoothly bounded compact sets. 
Thus, each $U^*\setminus K_n$ is connected. 
For each $n,$ let 
$$
	Q_n=K_n\setminus \bigcup\{S^0_j: S^0_j\cap \partial K_n\not=\emptyset\}.
$$
Since the $S_j's$ form a locally finite family in $U,$ 
$\{Q_n\}$ is also an exhaustion of $U$ by compact sets. 
$$
	U^*\setminus Q_n = (U^*\setminus K_n)\cup \bigcup\{S^0_j: S^0_j\cap \partial K_n\not=\emptyset\}.
$$
Since $U^*\setminus K_n$ is connected and each $S_j^0$ in this union meets $U^*\setminus K_n,$ we have that $U^*\setminus Q_n$ is also connected. Thus, the exhaustion $\{Q_n\}_{n=1}^\infty$ of $U$ is also regular. Since the family $\{Q_n\}_{n=1}^\infty$ is locally finite in $U,$  
$$ 
	V_n=(R\cup \{*_U\})\setminus Q_n=(R\setminus Q_n)\cup\{*_U\}
$$
is a neighbourhood base of $*_U$ in $R\cup \{*_U\}.$ We wish to show that each $V_n$ is connected. 
The boundary $\partial K_n$ of $K_n$ has finitely many connected components, each of which is the boundary of a complementary connected component $W_i, \, i=1,\ldots,m$ of $K_n$ and each $W_i$
is unbounded  in $U$  (since $U^*\setminus K_n$ is connected). Set
$$
	V_{n,i}= \{*_U\}\cup\bigcup\{R_j: S^0_j\cap\overline W_i\not=\emptyset\}.
$$
Since the boundary of the connected components of $K_n$ are the boundaries of the  $W_i`s,$ we have $V_n=V_{n,1}\cup\cdots\cup V_{n,m}$ To show the connectivity of $V_n,$ it is sufficient to show the connectivity of each $V_{n,i},$ since they all have the point $*_U$ in common. 
Hence, it is sufficient to show that 
\begin{equation}\label{component}
	\bigcup\{R_j: S^0_j\cap\overline W_i\not=\emptyset\}
\end{equation}
is connected, since the latter is unbounded. Thus, it is sufficient to show that for every two shells $R_k$ and $R_\ell$  such that $S^0_k\cap \overline W_i\not=\emptyset$ and $S^0_\ell\cap \overline W_i\not=\emptyset$  there is a finite chain of such shells connecting them. We do know that there is a finite chain of shells connecting $R_k$ and $R_\ell$,  possibly without the condition that the corresponding balls meet $\overline W_i$. Suppose such a connecting chain does not have this intersection property. We can replace it by a chain that does have the required intersection property as follows. The first shell in the chain is $R_k$ and it does have the required intersection property. Let $R_{\nu}$ be the last consecutive shell following $R_k$ having the required intersection property.  Since $R_{\ell}$ also has the required intersection property, there is a last consecutive $R_{\mu}$ following $R_\nu$ not having the required intersection property. Since $R_\nu$ and $R_{\mu+1}$ have the property that their corresponding balls $S^0_\nu$ and $S^0_{\mu+1}$ intersect $\partial W_i$ which is path-connected, we can replace the subchain from $R_\nu$ to $R_{\mu+1}$ by a finite chain of shells, whose corresponding balls intersect $\partial W_i.$ Since the initial chain from $R_k$ to $R_\ell$ is finite, after finitely many of such replacements, we end up with a finite chain of shells all having the required intersection property, showing that  (\ref{component}) is indeed connected and therefore $V_{n,i}$ is connected. Since this holds for each $V_{n,i}$ and all of them have the point $*_U$ in common, their union $V_n$ is also connected. This completes the proof of  the claim that $R\cup\{*_U\}$ is locally connected.

Let $F$ be the closed set $U\setminus R.$ We wish to show that $F$ satisfies the hypotheses of Theorem \ref{Lemma 13}. As a first step, we have shown that $U^*\setminus F=R\cup\{*_U\}$ is connected and locally connected. Recalling the construction of the shells $R_j,$ we may write each shell $R_j$ in the form $R_j= B_j\setminus S_j,$ where $B_j$ is an open topological ball containing the closed ball $S_j.$ Setting $F_0=F\cap S_0$ and $F_j=(F\cap S_j)\setminus (B_0\cup\cdots\cup B_{j-1}),$ for $j>0,$ we may write $F=F_0\cup F_1\cup \cdots.$ The family $\{F_j\}_{j=0}^\infty$ is locally finite and consists of compact sets. Thus, $F$ indeed satisfies the hypotheses of Theorem \ref{Lemma 13}.

Now, choose a continuous function $\varepsilon:F\to (0,1]$ such that $\varepsilon(x)\to 0,$  when $x\to\partial U.$   
From the construction of $F,$ since $F_j\subset S_j$ for $j=0,1,2,\ldots$, we see that$\{F_j\}_{j=1}^\infty$ is  a locally finite family  of disjoint compact sets with 
$|F_j|<d(F_j,\partial U).$

For each $F_{j}$ of $F$ we choose a point $x_{F_{j}}\in F_{j}$ and define a function $g$ on $F$ as $g(x)=\sum_{j\in\mathbb N}\varphi(x_{F_{j}})\chi_{F_{j}}(x).$  Clearly the function $g$ is harmonic on $F$ since $g$ is constant on each $F_j.$ Since $F$ is a Runge-Carleman set in $U,$ we can approximate $g$ by a harmonic function $\widetilde\varphi$ on $U$ with $\vert g(x)-\widetilde\varphi(x)\vert<\varepsilon(x)$ for all $x\in F$.
			
	We claim that $\widetilde\varphi(x)$ satisfies the theorem. Fix $p\in \partial U$ and consider a sequence $\{x_{k}\}_{k=1}^{\infty}$ in $F$ that converges to $p$. Then, $x_{k}\in F_{j_{k}}$ for some $F_{j_{k}}\subset F$. 
	Consider the sequence $\{x_{F_{j_{k}}}\}_{k=1}^{\infty}$ where $x_{F_{j_{k}}}$ is the fixed element in $F_{j_{k}}$ defining $g$. We have that $\vert F_{j_{k}}\vert\to 0$, so
	\[
		p=\lim_{k\to \infty}x_{k}=\lim_{k\to\infty} x_{F_{j_{k}}}
	\]
	and by the continuity of $\varphi$ at $p$ we have that  
	\begin{equation*}
		\limsup_{k\to\infty}\vert \widetilde \varphi(x_{k})-\varphi(p)\vert
		\leq\limsup_{k\to\infty}\big(\vert g(x_{k})-\varphi(p)\vert+\epsilon(x_{k})\big)
		=\limsup_{k\to\infty}\vert \varphi(x_{F_{j_{k}}})-\varphi(p)\vert
		=0.
	\end{equation*}

	It only remains to show that $F$ satisfies that, for every $p\in \partial U$, 
	$$
	\mu_{U}(F,p)=\liminf_{r\rightarrow 0}\frac{\mu\big(B(p,r)\cap F\big)}{\mu\big(B(p,r)\cap U\big)}=1.
	$$ 
For this we shall show that
$$
	\limsup_{r\rightarrow 0}\frac{\mu(B(p,r)\cap(U\setminus F))}{\mu(B(p,r)\cap U)}=0.
$$
Fix $p\in \partial U$ and $\epsilon>0.$ Choose $j_\epsilon$ so that
$$
	\sum_{j\ge j_\epsilon} 2^{-j}<\epsilon.
$$
Choose $r_\epsilon>0$ such that $B(p,r_\epsilon)$ is disjoint from the  neigbourhoods $R_j$ of the segments $s_j$ for $j\le j_\epsilon.$ Then, for all $r<r_\epsilon$, 
 since $(U\setminus F) {=} \cup_j R_j$,
\begin{equation*}
	\frac{\mu(B(p,r)\cap(U\setminus F))}{\mu(B(p,r)\cap U)}
	 = \frac{\mu(B(p,r)\cap(\cup_{j} R_j))}{\mu(B(p,r)\cap U)}\le\sum_{j>j_\epsilon}\frac{\mu(B(p,r)\cap R_j)}{\mu(B(p,r)\cap U)}	\le\sum_{j>j_\epsilon}2^{-j} <\epsilon
	\end{equation*}
	Thus, the $\mu$-density of $U\setminus F$  relative to $U$ at $p$ is at most $\epsilon.$ Since $p$ and $\epsilon$ are arbitrary, this proves the result.

\end{proof}


\section{Harmonic universality in measure}
\label{universalityinmeasure}

We start this section by recalling the following theorem about approximation in measure on  non-compact Riemannian manifolds.

\begin{theorem}[{\cite[Theorem 15]{GS19}}]\label{GS}
 For every measurable subset $E$ of a non-compact Riemannian manifold $M,$ and for every
measurable function $v : E\rightarrow \mathbb R$ there exists a sequence $u_j\in H(M)$ such that $u_j \rightarrow v$ in measure.
\end{theorem}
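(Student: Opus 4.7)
Convergence in measure here is understood locally: $u_j\to v$ in measure on $E$ should mean $\mu(\{x\in E\cap K:|u_j(x)-v(x)|>\varepsilon\})\to 0$ for every compact $K\subset M$ and every $\varepsilon>0$. The plan is, for each $j$, to produce a single $u_j\in H(M)$ that agrees with $v$ to within $2/j$ off a set of measure $<2^{-j}$ in a large compact window; a diagonal argument then finishes the proof. The guiding chain of reductions is: measurable $\to$ continuous on a closed subset (Lusin) $\to$ locally constant on a fine disjoint tiling (uniform continuity) $\to$ harmonic on all of $M$ (Runge--Carleman, via Theorem \ref{Lemma 13}).

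\textbf{Construction of the tile.} Fix a regular exhaustion $\{K_n\}$ of $M$. Given $j$, Lusin's theorem applied to $v$ on the compact set $\overline{E\cap K_j}$ yields a closed set $L_j\subset E\cap K_j$ with $\mu((E\cap K_j)\setminus L_j)<2^{-(j+1)}$ on which $v|_{L_j}$ is continuous, hence uniformly continuous; pick $\delta_j>0$ so that $|v(x)-v(y)|<1/j$ whenever $x,y\in L_j$ and $d(x,y)<\delta_j$. Next, choose a locally finite family of pairwise disjoint, smoothly bounded closed topological balls $T_{j,k}\subset M$ of diameter $<\delta_j$ whose union covers $K_j$ up to a set of $\mu$-measure $<2^{-(j+1)}$ and such that $M^*\setminus\bigsqcup_k T_{j,k}$ is connected and locally connected. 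One convenient realization: take a sufficiently fine smooth triangulation of $M$ and slightly shrink each top-dimensional simplex, discarding only a thin tubular neighbourhood of the codimension-one skeleton; thinness controls the lost volume, while the open tube leaves the complement path-connected and locally connected in $M^*$. Set $F_{j,k}=T_{j,k}\cap L_j$, keep only the nonempty pieces, let $F_j=\bigsqcup_k F_{j,k}$, pick $x_{j,k}\in F_{j,k}$, and define $g_j$ to equal $v(x_{j,k})$ on each $F_{j,k}$. Then $F_j$ is a locally finite disjoint union of compact sets, $M^*\setminus F_j$ is connected and locally connected, and $g_j$ extends as a locally constant (hence harmonic) function to a neighbourhood of $F_j$, so $g_j\in H(F_j)$.

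\textbf{Harmonic approximation and conclusion.} Theorem \ref{Lemma 13} applied with $U=M$ produces $u_j\in H(M)$ with $|u_j-g_j|<1/j$ on $F_j$; combined with the oscillation bound this gives $|u_j-v|<2/j$ on $F_j$, while by construction $\mu((E\cap K_j)\setminus F_j)<2^{-j}$. Since any fixed compact $K$ eventually lies in $K_j$, convergence in measure follows. The main obstacle is the tile construction in the middle step, where three competing requirements must be satisfied simultaneously on $\{T_{j,k}\}$: a measure-near exhaustion of $K_j$ by disjoint compact pieces, diameters small enough to exploit uniform continuity of $v|_{L_j}$, and the precise topological regularity of the complement in $M^*$ demanded by Theorem \ref{Lemma 13}. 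The smooth-triangulation-with-thin-skeleton trick is the natural way to meet all three at once on an arbitrary Riemannian manifold; once it is in place, the rest of the argument is a standard Lusin-plus-Runge--Carleman diagonalization.
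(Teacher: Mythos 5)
The paper does not actually prove this statement; it quotes it from \cite[Theorem 15]{GS19}, so the only in-house benchmark is the proof of the holomorphic analogue, Theorem \ref{S-measure}, which follows the same Lusin--tiling--Runge scheme you propose. Your outline is the right one, but it contains one genuine gap: the assertion that $M^*\setminus F_j$ is connected. Your construction only controls the complement of the union of the tiles, $M^*\setminus\bigsqcup_k T_{j,k}$; the set you actually feed to Theorem \ref{Lemma 13} is $F_j=\bigsqcup_k(T_{j,k}\cap L_j)$, and the Lusin set $L_j$ is an arbitrary closed set, so a single piece $F_{j,k}=T_{j,k}\cap L_j$ may separate $M$ --- for instance $L_j$ could meet $T_{j,k}$ in a topological sphere, creating a bounded component of $M\setminus F_j$ inside that tile. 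Then the hypothesis that $U^*\setminus F$ be connected in Theorem \ref{Lemma 13} fails, and the Runge--Carleman step is not licensed. (Local connectivity, by contrast, is automatic and needs no triangulation trick: $M^*\setminus F_j$ is open in the locally connected space $M^*$.)

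The gap is exactly the one the paper repairs in the proof of Theorem \ref{S-measure} by invoking \cite[Theorem 1.2]{GS17}: one must sacrifice a further set of arbitrarily small measure, or otherwise modify the compact set, so that the set on which one approximates has connected complement. In your setting the cheapest fix is to replace each $F_{j,k}$ by its hull $\widehat F_{j,k}$, namely its union with all bounded components of $M\setminus F_j$ that it encloses. Since the tiles are pairwise disjoint closed sets separated by the connected, unbounded tube around the skeleton, every bounded component of $M\setminus F_j$ lies inside a single $T_{j,k}$; hence $\widehat F_{j,k}\subset T_{j,k}$, the hulls are still disjoint, compact and locally finite, $M^*\setminus\bigsqcup_k\widehat F_{j,k}$ is the unbounded component together with $*$ and so is connected and (being open in $M^*$) locally connected, and the constant $v(x_{j,k})$ is of course still harmonic on a neighbourhood of $\widehat F_{j,k}$. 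Approximating $g_j$ on $\widehat F_j$ is at least as strong as approximating it on $F_j$, so with this one additional step your argument goes through; the remaining ingredients (Lusin, the oscillation bound, the measure bookkeeping, and the local reading of convergence in measure, which matches how the paper uses the theorem) are fine.
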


A harmonic function $u$ defined on $\mathbb R^n$ for $n\geq 2$ is said to be universal 
for uniform convergence
if the set of its translates $\{u(\cdot+a_j):a_j\in\mathbb R^n\}$  is dense in the space of all harmonic functions on $\R^n$  with the topology of local uniform convergence.

\begin{definition}
A measurable function $u$ on $\R^n$ is said to be universal 
for convergence in measure if for every measurable function $v$ there is a sequence $a_j\in \R^n$ such that the sequence of translates $u(\cdot+a_j)$ converges to $v$ in measure on compact subsets.  
\end{definition}

 In our next result we show that universality for uniform convergence is stronger than universality for convergence in measure.

\begin{theorem}
\label{harmonicuniversal}
Every harmonic function, which is universal for uniform convergence is also universal for convergence in measure.
\end{theorem}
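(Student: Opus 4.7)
The plan is to combine two ingredients already available in the paper: Theorem \ref{GS}, which says that every measurable function on $\mathbb R^n$ can be approximated in measure by entire harmonic functions, and the hypothesis that $u$ is universal for local uniform convergence, which says that every entire harmonic function can be approximated locally uniformly by translates of $u$. The result will then follow from a straightforward diagonal/triangle-inequality argument, since local uniform approximation implies approximation in measure on compact sets.

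More precisely, fix a measurable function $v$ on $\mathbb R^n$; I must produce translates $u(\cdot+a_k)$ with $u(\cdot+a_k)\to v$ in measure on compacta. First I would apply Theorem \ref{GS} (with $M=E=\mathbb R^n$) to obtain a sequence $w_k\in H(\mathbb R^n)$ with $w_k\to v$ in measure on every compact set. Next, using that the translates of $u$ are dense in $H(\mathbb R^n)$ for local uniform convergence, I would choose, for each $k$, a vector $a_k\in\mathbb R^n$ such that
\[
	\sup_{x\in \overline{B(0,k)}}\bigl|u(x+a_k)-w_k(x)\bigr|<\frac{1}{k}.
\]

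Finally I would verify the convergence in measure on compacta. Given a compact set $K\subset\mathbb R^n$ and $\varepsilon>0$, choose $k_0$ so large that $K\subset \overline{B(0,k)}$ and $1/k<\varepsilon/2$ for every $k\ge k_0$. The triangle inequality gives, for such $k$,
\[
	\{x\in K:|u(x+a_k)-v(x)|>\varepsilon\}\subset \{x\in K:|w_k(x)-v(x)|>\varepsilon/2\},
\]
and the measure of the right-hand set tends to $0$ by the choice of $w_k$. Hence $\mu(\{x\in K:|u(x+a_k)-v(x)|>\varepsilon\})\to 0$, which is exactly what is required.

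I do not foresee a genuine obstacle here; the only subtlety is a cosmetic one, namely making sure that the notion of convergence in measure used in Theorem \ref{GS} coincides with the one in the definition preceding Theorem \ref{harmonicuniversal} (both should mean convergence in measure on every compact subset, since $\mathbb R^n$ has infinite volume). Once this is pinned down, the argument is essentially a one-line application of the triangle inequality plus the two cited density statements.
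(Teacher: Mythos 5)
Your argument is correct and follows essentially the same route as the paper's own proof: approximate $v$ in measure by entire harmonic functions $w_k$ via Theorem \ref{GS}, use Birkhoff-type universality of $u$ to pick translates $u(\cdot+a_k)$ uniformly close to $w_k$ on $\overline{B(0,k)}$, and conclude by the triangle inequality on each compact set. The only cosmetic difference is that the paper bounds the exceptional set by $\{x\in K: |u_j(x)-v(x)|>\epsilon-1/j\}$ where you use $\epsilon/2$; both work.
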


\begin{proof} 
On $\R^n,$ let $u$ be a universal harmonic function universal for uniform convergence
and $v$ a measurable  function. Fix a compact set $K\subset \R^n$ and $\epsilon>0.$ By Theorem \ref{GS}, there is a sequence $\{u_j\}_{j=1}^\infty$,  $u_j\in H(\R^n),$ such that
$u_j\to v$ in measure. Since $u$ is universal in the sense of Birkhoff, there are $a_j\in \R^n,$, $j=1,2,\ldots$, such that 
$$
	|u(x+a_j)-u_j(x)|<\frac{1}{j},  \quad \mbox{for all} \quad \|x\|<j. 
$$
Choose $j_\epsilon> \|x\|,$ for all $x\in K.$ Then, for all $j>j_\epsilon,$ 
$$
	\mu\{x\in K: |u(x+a_j)-v(x)|>\epsilon\} \le 
	\mu\{x\in K:|u_j(x)-v(x)|>\epsilon-1/j\}.
$$
Since the $u_j\to v$ in measure, the right hand side tends to $0$ as $j\to\infty.$ Therefore $u(\cdot+a_j)$ converges to $v$ in measure. 

\end{proof}

Recall that a subset of a topological space is said  to be {\em meagre} if it is a countable union of nowhere dense sets. Meagre sets are also called sets of first Baire category. Sets which are not meagre are called sets of second Baire category. A subset $A$ of a topological space $X$ is said to be {\em residual} in $X,$ if $X$ is of second category while $X\setminus A$ is of first category. Thus, not only is the complement $X\setminus A$ meagre, but $A$ is {\em not} meagre (since the union of two meagre sets is obviously meagre). In a topological sense we may consider that  a residual subset of a topological space contains ``most" points of the space.
The space $H(\R^n)$ of harmonic functions is of second Baire category (by the Baire Category Theorem) and the family of functions in $H(\R^n)$ which are universal for uniform convergence
is known  (see e.g \cite{AG96}) to be residual in $H(\R^n).$  Since a set containing a residual set is clearly residual, we immediately have the following corollary.

\begin{corollary} 
\label{Corollary:universal}
The set of harmonic functions on $\R^n$ which are universal for convergence 
in measure is residual in the space $H(\R^n)$ of all harmonic functions on $\R^n.$ 
\end{corollary}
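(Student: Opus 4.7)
The plan is to exhibit the set of measure-universal harmonic functions as a superset of an already-known residual set, and then to invoke the elementary fact that supersets of residual sets in a Baire space are residual.

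First I would fix the notation: write $\mathcal{U}_{\mathrm{unif}}\subset H(\R^n)$ for the set of harmonic functions on $\R^n$ that are universal for uniform convergence (in the sense of Birkhoff), and $\mathcal{U}_{\mathrm{meas}}\subset H(\R^n)$ for those that are universal for convergence in measure on compacta. Theorem \ref{harmonicuniversal}, applied to an arbitrary $u\in\mathcal{U}_{\mathrm{unif}}$, produces, for every measurable $v$ on $\R^n$, a sequence of translates of $u$ converging to $v$ in measure on compact sets. Hence the inclusion $\mathcal{U}_{\mathrm{unif}}\subset \mathcal{U}_{\mathrm{meas}}$ is immediate.

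Next I would appeal to the known residuality result cited in the paragraph preceding the corollary (reference \cite{AG96}), which asserts that $\mathcal{U}_{\mathrm{unif}}$ is residual in $H(\R^n)$, the latter space being equipped with the topology of local uniform convergence. By the Baire category theorem applied to the complete metrizable space $H(\R^n)$, this space is of the second category, and the complement $H(\R^n)\setminus\mathcal{U}_{\mathrm{unif}}$ is meagre.

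Finally, I would close the argument with the general observation that a superset of a residual set in a space of second category is itself residual. Indeed, from $\mathcal{U}_{\mathrm{unif}}\subset\mathcal{U}_{\mathrm{meas}}$ one obtains $H(\R^n)\setminus\mathcal{U}_{\mathrm{meas}}\subset H(\R^n)\setminus\mathcal{U}_{\mathrm{unif}}$, so the complement of $\mathcal{U}_{\mathrm{meas}}$ is contained in a meagre set and is therefore meagre; since $H(\R^n)$ itself is of second category, $\mathcal{U}_{\mathrm{meas}}$ is residual. There is no real obstacle here: the corollary is a one-line deduction once Theorem \ref{harmonicuniversal} and the cited residuality from \cite{AG96} are in hand, and the only point worth stating explicitly is the monotonicity of residuality under set inclusion.
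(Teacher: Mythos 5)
Your proposal is correct and is essentially the paper's own argument: the paper likewise combines the inclusion of the uniformly universal functions into the measure-universal ones (via Theorem \ref{harmonicuniversal}) with the residuality of the former from \cite{AG96} and the remark that a superset of a residual set is residual. No difference in substance.
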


Theorem \ref{DirichletTheorem}, Theorem \ref{harmonicuniversal} and Corollary \ref{Corollary:universal} are valid if we consider functions of a single complex variable instead of harmonic functions. Thus, all the results concerning harmonic functions on Riemannian manifolds hold for holomorphic functions on an open Riemann surface endowed with a Riemannian metric, see \cite[Theorem 4.4]{FalcoGauthier}. Moreover, we have the analogous results for convergence in measure for entire universal functions.
All proofs are the same, since all the tools from harmonic approximation, which which we have employed have analogues for holomorphic approximation.

Similar results hold for functions of several complex variables, but the situation in several complex variables is more delicate, since the underlying approximation theory is less developed. This will be considered in a future paper.


\section{Universality of Dirichlet series for convergence in measure}
\label{UniversalDirichletseries}

In this section, 
we study universality for convergence in measure 
of Dirichlet series defined on $\mathbb C$. We consider functions $\mathcal L(s)$ that  have a representation as a Dirichlet series 
\[
	\mathcal L(s)=\sum_{n=1}^{\infty}\frac{a(n)}{n^{s}},
\]
in some half-plane $\sigma>\sigma_0,$ where $\sigma = \Re s.$ 
The class of functions  $\tilde{\mathcal S},$ 
sometimes called the Steuding class (see \cite{S}), 
consists of all  Dirichlet series that satisfy:
\begin{itemize}
\item[(i)] \textit{Holomorphic continuation:} There exists a real number $\sigma_{\mathcal L}$ such that $\mathcal L(s)$ has a holomorphic continuation to the half-plane $\sigma > \sigma_{\mathcal L}$ with $\sigma_{\mathcal L} < 1$ except for at most a pole at $s = 1$,
\item[(ii)] \textit{Finite order:} There exists a constant $\mu_{\mathcal L}\geq 0$ such that, for every fixed $s$ with $\Re s>\sigma_{\mathcal L}$ and every $\varepsilon>0$,
\[
	\mathcal L(s+it)\ll \vert t\vert^{\mu_{\mathcal L}+\epsilon}\ \ \ \text{ as }\ \ \ |t|\to\infty
\] where  the implicit constant may depend on $\epsilon$,
\item[(iii)] \textit{Polynomial Euler product:} There exists a positive integer $m$ and for every prime $p$, there are complex numbers $\alpha_{j} (p$), $1 \leq  j \leq m$, such that
\[
	\mathcal L(s)=\prod_{p}\prod_{j=1}^{m}\left(1-\frac{\alpha_{j}(p)}{p^{s}}\right)^{-1},
\]
\item[(iv)] \textit{Prime mean-square:} There exists a positive constant $k$ such that
\[
	\lim_{x\to\infty} \frac{1}{\pi(x)}\sum_{p\leq x}\vert a(p)\vert^{2}=k.
\]
\end{itemize}

Steuding's definition of the class $\widetilde{\mathcal S}$ includes an additional condition, but (as Steuding remarks) it is a consequence of the other conditions. 

Note that the class $\widetilde{\mathcal S}$ is not empty. In particular the Riemann zeta-function $\zeta(s)$, the Dirichlet $L$-functions and the Dedekind zeta-functions are in $\tilde{\mathcal S}$.
 
For a function $\mathcal L$ in the class $\widetilde{\mathcal S}$, 
we consider convergence on vertical lines of the mean-square
$$
	\frac{1}{2T}\int_T^T|\mathcal L(\sigma+it)|^2dt
$$
and we denote by $\sigma_{m}$ the  abscissa of the mean-square half-plane of $\mathcal L$ (see \cite[Section 2.1]{S}).

In 1995, Voronin proved his spectacular theorem on the universality for uniform convergence of the
Riemann zeta-function and later Steuding  extended Voronin's theorem to the class $\widetilde{\mathcal S}$ of Dirichlet series  as follows (see also \cite{M}). 

\begin{theorem}\label{Steuding}\cite[Theorem 5.14]{S}
Let $\mathcal L\in \tilde S, \, K$ be a compact subset of the strip $\frak S=\{ \sigma_m<\sigma <1\}$ with connected complement, and let $g$ be a zero-free continuous function on $K$ which is holomorphic in the interior of $K.$ Then, for every $\epsilon > 0,$ 
$$
	\liminf_{T\to\infty}\frac{1}{T}
\mbox{meas}\left\{t\in[0,T]:\max_{s\in K}|\mathcal L(s+it)-g(s)|<\epsilon\right\} > 0.
$$ 
\end{theorem}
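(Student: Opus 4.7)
The plan is to follow Bagchi's probabilistic refinement of Voronin's original argument, which splits universality into two independent statements: a limit theorem identifying the weak limit of the vertical translates of $\mathcal L$, and a support (density) theorem for that limit. Throughout, set $\frak S = \{\sigma_m<\sigma<1\}$ and view the translates $t\mapsto \mathcal L(\cdot+it)$ as random elements of the Fr\'echet space $H(\frak S)$ of holomorphic functions on $\frak S$.

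For the limit theorem I would introduce the probability space $\Omega = \prod_p \mathbb T$ with Haar measure $\mathbb P$ and the random Dirichlet series
$$
	\mathcal L(s,\omega) = \prod_p \prod_{j=1}^m \left(1 - \frac{\alpha_j(p)\,\omega_p}{p^s}\right)^{-1},
$$
which, by conditions (iii) and (iv), converges $\mathbb P$-almost surely in $H(\frak S)$. The engine is Kronecker--Weyl equidistribution applied to the flow $t\mapsto (p^{-it})_p$, which is equidistributed on $\Omega$ because $\{\log p\}$ is $\mathbb Q$-linearly independent; this produces the limit theorem first for truncated Euler products. One then passes to $\mathcal L$ itself by a mean-square truncation argument in which conditions (i) and (ii) provide the holomorphic continuation and the polynomial growth in $t$ needed to control the tail uniformly in $T$.

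The support theorem asserts that $\{\mathcal L(\cdot,\omega):\omega\in\Omega\}$ is dense in the set of zero-free functions in $H(\frak S)$. Passing to logarithms, this reduces to showing that the closed span of the independent random summands $-\sum_{j=1}^m \log(1-\alpha_j(p)\omega_p/p^s)$, $p$ prime, is all of $H(\frak S)$; this is where the prime mean-square condition (iv) intervenes, guaranteeing that the leading $p^{-s}$-terms supply enough independent ``degrees of freedom'' to approximate any target. To connect the abstract support statement to $g$, I would use that $K^c$ is connected and $g$ is zero-free, so that Mergelyan's theorem applied to $\log g$ yields a polynomial approximation; exponentiating gives a zero-free holomorphic approximant of $g$ on a neighbourhood of $K$, which by the support theorem is uniformly close on $K$ to some realization $\mathcal L(\cdot,\omega_0)$. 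The Portmanteau characterization of weak convergence, applied to the open set $\{h:\max_{s\in K}|h(s)-g(s)|<\epsilon\}$, then upgrades this to the claimed positive lower density of shifts.

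I expect the principal obstacle to be the support step. For a general member of $\tilde{\mathcal S}$ with polynomial Euler product of degree $m>1$ the logarithms of the Euler factors are no longer a single geometric series, and one must verify that (iv) is the correct substitute for the trivial lower bound $|a(p)|=1$ available when $\mathcal L = \zeta$, so that the resulting span is still dense in the Bergman-type Hilbert space attached to $\frak S$. Once this density is secured, the remaining steps follow the classical Voronin--Bagchi template.
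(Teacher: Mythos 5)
The paper does not prove this statement at all: it is quoted verbatim from Steuding \cite[Theorem 5.14]{S}, so there is no internal proof to compare against. Measured against the proof in the cited source, your outline reproduces exactly the right architecture — Bagchi's decomposition into (a) a limit theorem for the vertical translates in $H(\frak S)$, proved via Kronecker--Weyl equidistribution of $(p^{-it})_p$ on $\prod_p\mathbb T$ for truncated Euler products and then a mean-square tail estimate (this is precisely where the abscissa $\sigma_m$ and conditions (i)--(ii) enter), and (b) a support theorem showing the limit law charges every neighbourhood of every zero-free element of $H(\frak S)$, with the target $g$ fed in through Mergelyan applied to a continuous logarithm of $g$ (which exists because $\C\setminus K$ is connected and $g$ is zero-free) and the conclusion extracted by the Portmanteau lower bound for open sets. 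That is Steuding's proof.

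What keeps this from being a proof rather than a correct table of contents is the step you yourself flag: the denseness lemma. Showing that the closed span of the functions $-\sum_j\log(1-\alpha_j(p)\omega_p p^{-s})$ is all of $H(\frak S)$ is the analytic heart of the theorem; for $\zeta$ it already requires the positive-density method (a Paley--Wiener/Bernstein-type argument on the Laplace transform of a hypothetical annihilating functional, combined with the prime number theorem), and for general $\mathcal L\in\tilde{\mathcal S}$ condition (iv) replaces $|a(p)|=1$ in exactly that argument — but none of this is carried out. Two smaller points you should make explicit if you write this up: the support of the limit measure consists of functions that are either non-vanishing or identically zero, which is why the zero-free hypothesis on $g$ cannot be dropped (compare the paper's remark after Theorem \ref{Bagchi}); and the passage from truncated Euler products to $\mathcal L$ itself uses the mean-square bound on vertical lines $\sigma>\sigma_m$, not merely the pointwise polynomial growth in (ii).
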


For the Riemann zeta-function it follows from  \cite[Theorems 7.2 and 7.3]{T} that $\sigma_m=1/2.$ 
Thus, for the Riemann zeta-function on the right-half of the fundamental strip 
we have universality for uniform convergence, which is precisely Voronin's theorem.

\begin{remark}\label{remark}
Note that the strip of universality $\frak S = (\sigma_{m}<\sigma<1)$ of $\mathcal L$ is not empty since 
$$
\sigma_{m} \le 
\max\left\{\frac{1}{2},1-\frac{1-\sigma_{\mathcal L}}{1+2\mu_{\mathcal L}}\right\} < 1,
$$ 
(see \cite[Theorem 2.4]{S}). 
\end{remark}
  
Our first result in this section establishes universality for convergence in measure of functions in the  class $\widetilde{\mathcal S}$.

\begin{theorem}\label{S-measure}
Let $\mathcal L$ be a function in $\tilde{\mathcal S}.$ 
Consider the strip  $\frak S = (\sigma_{m}<\sigma<1),$ a bounded Borel measurable subset $A$ of $\frak S$   and a measurable function $f$ on $A.$ Then, there exists a sequence $\{t_n\}$ of real numbers, such that $\mathcal L(\cdot+it_n)$ converges to $f$ in measure. 
\end{theorem}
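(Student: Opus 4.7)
The plan is to reduce Theorem \ref{S-measure} to Steuding's universality theorem (Theorem \ref{Steuding}) by approximating the measurable target $f$, in measure on $A$, by zero-free holomorphic functions on a compact set---this mirrors the deduction of Theorem \ref{harmonicuniversal} from Birkhoff-type universality. I would first fix a closed rectangle $K\subset \frak S$ with connected complement such that $A\subset K$, which is possible because $A$ is bounded and $\frak S$ is an open vertical strip. A standard diagonal argument then reduces the theorem to the following single-parameter statement: given $\epsilon,\delta>0$, produce a real $t$ with $\mu\{s\in A:|\mathcal L(s+it)-f(s)|>\epsilon\}<\delta$.

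The crux is to build a zero-free entire function $h$ with $|h-f|<\epsilon/2$ outside a set of $\mu$-measure less than $\delta/2$ in $A$. My strategy is to truncate $f$ and lift it off zero: for parameters $\eta>0$ small and $M>0$ large, set
\[
	\widehat{f}(s)=\begin{cases}f(s) & \text{if }\eta\le|f(s)|\le M,\\ \eta & \text{otherwise},\end{cases}
\]
extended by $\widehat{f}\equiv\eta$ on $K\setminus A$. Then $\eta\le|\widehat{f}|\le M$ on $K$, and since $f$ is finite-valued one may choose $\eta<\epsilon/8$ and $M$ large enough that $|\widehat{f}-f|<\epsilon/4$ outside a set of $\mu$-measure less than $\delta/4$ in $A$. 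Choosing a measurable branch of the logarithm, set $g:=\log\widehat{f}$; this is bounded and measurable on $K$. I would then invoke the holomorphic analogue of Theorem \ref{GS} for entire functions on $\mathbb C$ (see \cite[Theorem 4.4]{FalcoGauthier}) to pick an entire $F$ so close to $g$ in measure on $K$ that $h:=e^{F}$ satisfies $|h-\widehat{f}|<\epsilon/4$ outside an additional set of $\mu$-measure less than $\delta/4$. The passage from in-measure closeness of $F$ to $g$ to in-measure closeness of $e^{F}$ to $e^{g}=\widehat{f}$ is legitimate because $|e^{g}|\le M$, so $\exp$ is uniformly continuous on the relevant range. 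The resulting $h$ is entire, hence zero-free, continuous on $K$ and holomorphic in $K^{0}$, and $|h-f|<\epsilon/2$ in $A$ outside an exceptional set of $\mu$-measure less than $\delta/2$.

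To conclude, I would apply Theorem \ref{Steuding} to $K$ (connected complement) and $h$ (zero-free continuous on $K$ and holomorphic in $K^{0}$): it yields a set of positive lower density of real $t$ with $\max_{s\in K}|\mathcal L(s+it)-h(s)|<\epsilon/2$, and in particular at least one such $t$. Combining the uniform estimate on $K$ with the in-measure estimate $|h-f|<\epsilon/2$ off a $\delta/2$-set gives $\mu\{s\in A:|\mathcal L(s+it)-f(s)|>\epsilon\}<\delta$, which is exactly the single-parameter statement. The diagonal argument with $\epsilon_{n}=\delta_{n}=1/n$ then selects $t_{n}$ with $\mathcal L(\cdot+it_{n})\to f$ in measure on $A$.

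The hard part is the zero-freeness hypothesis in Steuding's theorem: by Hurwitz's theorem, a uniform limit of zero-free holomorphic functions on a connected open set is either identically zero or zero-free, so one cannot uniformly approximate a generic measurable $f$ by zero-free holomorphic functions. The workaround combines (a) the relaxation from uniform to in-measure approximation and (b) routing through $\log\widehat{f}$ and exponentiating, which automatically produces a zero-free entire approximant. The preliminary truncation that keeps $\widehat{f}$ bounded away from both $0$ and $\infty$ is exactly what makes the exponential transport measure convergence through the nonlinear map $F\mapsto e^{F}$.
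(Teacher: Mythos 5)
Your overall strategy is sound and genuinely different from the paper's, but the very first step is false as stated: from $A$ bounded and $A\subset\frak S$ you cannot conclude that some closed rectangle $K$ with $A\subset K$ lies inside the open strip $\frak S$. The set $A$ may accumulate on the boundary lines $\sigma=\sigma_m$ and $\sigma=1$ (e.g.\ $A$ could be a full open sub-rectangle of $\frak S$), and then no compact $K\subset\frak S$ contains $A$; yet Steuding's theorem requires $K$ to be a compact subset of the open strip. The repair is routine: since $A$ has finite measure, choose by inner regularity a closed rectangle $K\subset\frak S$ with $m_2(A\setminus K)<\delta/4$, run your construction on $A\cap K$, and absorb $A\setminus K$ into the exceptional set. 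A second, smaller point: you invoke approximation in measure of the complex-valued function $\log\widehat f$ by entire functions; the version of the Gauthier--Sharifi result quoted in this paper (Theorem \ref{GS-complex}) is stated for real-valued targets, so you should either cite the complex-valued form explicitly or note that it follows from Luzin plus Mergelyan on a compact subset with connected complement (which is exactly the machinery you would otherwise avoid). With these two repairs the argument goes through: the truncation $\eta\le|\widehat f|\le M$ does make $\exp$ uniformly continuous on the relevant range, so in-measure closeness of $F$ to $\log\widehat f$ passes to $e^F$ versus $\widehat f$, and $e^F$ is a legitimate zero-free input for Theorem \ref{Steuding}.

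Granting the repairs, your route differs from the paper's in an instructive way. The paper never builds a globally defined zero-free approximant: it uses Luzin to get a compact $A_n$ on which $f$ is continuous, a lemma from \cite{GS17} to pass to a compact $F_n\subset A_n$ with connected complement, and then a fine dyadic partition to replace $f$ by a locally constant function with nonzero values $b_Q$ on a slightly smaller compact $K_n$; zero-freeness is arranged simply by nudging each constant off zero, and Steuding's theorem is applied to that step function. Your approach instead manufactures zero-freeness structurally, by exponentiating an entire in-measure approximation of a measurable logarithm, and applies Steuding on a single fixed rectangle. Yours is conceptually cleaner and isolates the Hurwitz obstruction nicely, at the cost of importing the complex in-measure approximation theorem and the $\exp$/$\log$ transport; the paper's is more self-contained and elementary, at the cost of the combinatorial partition argument.
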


\begin{proof}
Let $m(E)$ denote the Lebesgue 2-measure of a planar set $E.$ 
We may assume that $m(A)>0,$ since otherwise the theorem is trivial. Multiplying by a constant we can assume that $m(A)>1.$ 
By Luzin's theorem, 
there exists a sequence  $\{A_n\},$ of compact subsets of $A,$ such that, for each $n,$ the restriction $f_n$ of $f$ to $A_n$ is continuous and $m(A_n)>m(A)-1/n.$ By \cite[Theorem 1.2]{GS17}, there is a compact subset $F_n\subset A_n,$ such that $R\setminus F_n$ is connected and $m(F_n)>m(A_n)-1/n.$ Notice that $\mathbb C\setminus F_n$ is also connected. 

Denote by  $\mathcal P_n$ a dyadic partition of $\C$ in squares of side length 
$2^{-k}$, where $k$ will be determined latter and, by abuse of notation, we use the same notation for the associated covering of $\C$ by closed squares.  Let $\mathcal Q_n$ be the collection of those (finitely many  by the compactness of $F_{n}$) squares $Q$ of this partition such that $m(F_n\cap Q)>0.$ For each fixed $n,$  by the compactness of the set $F_{n}$ and the continuity of the function $f$ we can choose  $k$ such that the partition $\mathcal P_n$  is so fine that
$$
	\max\{|f(z)-f(w)|: z,w\in F_n\cap Q\}<1/n, \quad \forall \quad Q\in \mathcal Q_n.
$$ 
Since $\mathcal Q_n$ consists of only finitely many squares $Q,$ for each such $Q$ we may choose a compact square $\widetilde Q\subset Q^0,$ such that defining $K_n=\bigcup_Q F_n\cap \widetilde Q$,  we have $m(K_n)>m(F_n)-1/n.$ 

We define a function $g_n$ holomorphic on (a neighbourhood of) $K_n$ by  choosing an arbitrary point $a_Q$ in each $F_n\cap\widetilde Q,$  choosing a value $b_Q\not=0$ such that $|f(a_Q)-b_Q|<1/n$ and finally setting $g_n=b_Q$ on each $F_n\cap \widetilde Q.$  That is $g_n=\sum_Q b_Q \chi_{F\cap \widetilde Q}$.

For each $z\in K_n,$ if $Q$ is the unique $Q\in \mathcal Q_n$ such that $z\in F_n\cap\widetilde Q,$ we have 
\begin{equation}\label{K_n}
	|f(z)-g_n(z)|=|f(z)-b_Q|  \le |f(z)-f(a_Q)|+|f(a_Q)-b_Q|<2/n. 
\end{equation}
Since $\mathbb C\setminus K_n$ is connected, it follows from Theorem \ref{Steuding} that there is a real number $t_n,$ such that 
\begin{equation}\label{Voronin}
	\max\{|g_n(z)-\mathcal L(z+it_n)|: z\in K_n\} < 1/n, \quad n=1,2,\ldots. 
\end{equation}
From equations \eqref{K_n} and \eqref{Voronin},
$$
	|f(z)-\mathcal L(z+it_n)|<3/n, \quad \mbox{for all} \quad z\in K_n.
$$
Since $K_n\subset F_n\subset A_n \subset A,$ we have 
$$
m(A\setminus K_n)=m(A\setminus A_n)+m(A_n\setminus F_n)+m(F_n\setminus  K_n)<3/n.
$$
Thus, 
$$
	m\{z\in A:|f(z)-\mathcal L(z+it_n)|>3/n\} < 3/n. 
$$
It follows that $\mathcal L(\cdot+it_n)\rightarrow f$ in measure on $A.$ 
\end{proof}

As a particular case of Theorem \ref{S-measure}, we have  universality in measure for the Riemann zeta-function.

\begin{corollary}
On the strip $R = (1/2<\Re z<1),$
let $m$ be the Lebesgue measure, $A$ be  a bounded Borel measurable subset of $R$ and $f$ be a measurable function on $A.$ Then, there exists a sequence $\{t_n\}$ of real numbers, such that $\zeta(\cdot+it_n)$ converges to $f$ in measure. 
\end{corollary}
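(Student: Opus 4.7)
The plan is to observe that this corollary is simply the specialization of Theorem \ref{S-measure} to the Riemann zeta-function, so essentially no new work is required beyond verifying the hypotheses. First I would note that the paper has already recorded that $\zeta \in \widetilde{\mathcal{S}}$ (in the paragraph following the definition of $\widetilde{\mathcal{S}}$, the Riemann zeta-function is listed as one of the motivating examples). Hence Theorem \ref{S-measure} applies to $\mathcal{L} = \zeta$.

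Next I would identify the strip. Theorem \ref{S-measure} produces a sequence $\{t_n\}$ such that $\mathcal{L}(\cdot + it_n) \to f$ in measure on any bounded Borel $A$ contained in $\mathfrak{S} = (\sigma_m < \sigma < 1)$. For the Riemann zeta-function the text has already recorded (just after Theorem \ref{Steuding}, citing \cite[Theorems 7.2 and 7.3]{T}) that $\sigma_m = 1/2$. Therefore the universality strip $\mathfrak{S}$ coincides with the strip $R = (1/2 < \Re z < 1)$ in the statement of the corollary.

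Putting the two observations together, the hypothesis of Theorem \ref{S-measure} is satisfied verbatim: $A$ is a bounded Borel measurable subset of $\mathfrak{S} = R$ and $f$ is a measurable function on $A$. The conclusion of the theorem then provides a sequence $\{t_n\} \subset \mathbb{R}$ such that $\zeta(\cdot + it_n) \to f$ in measure on $A$, which is exactly what the corollary asserts.

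There is no real obstacle here; the entire content is the numerical identification $\sigma_m(\zeta) = 1/2$, which has already been quoted from Titchmarsh and is the reason Voronin's classical universality theorem operates on the right half of the critical strip. The proof therefore reduces to a one-line invocation of Theorem \ref{S-measure} together with these two remarks.
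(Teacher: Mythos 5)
Your proof is correct and is exactly the paper's approach: the corollary is stated there as an immediate special case of Theorem \ref{S-measure}, using that $\zeta\in\widetilde{\mathcal S}$ and that $\sigma_m=1/2$ for the Riemann zeta-function (quoted from Titchmarsh). Nothing further is needed.
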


We shall now present an analogue of Theorem \ref{Steuding}, asserting  that, in the strip $\frak S=(\sigma_{m}<\sigma<1),$ functions in the class $\widetilde{\mathcal S}$ are universal  for convergence in measure. This is indeed stronger than  Theorem \ref{S-measure} and an alternative proof of Theorem \ref{S-measure} can be obtained by considering values $t_n$ obtained by applying this result for $\epsilon=n^{-1}$ for $n=1,2,\ldots$

\begin{theorem}\label{Voronin measure}
Let $\mathcal L\in \tilde S, \, A$ be a bounded measurable  subset of the strip $\frak S=\{ \sigma_m<\sigma <1\}$ and let $\phi$ be a measurable function on $\mathcal A.$ Then, for every $\epsilon > 0,$ 
$$
	\liminf_{T\to\infty}\frac{1}{T}
	\mbox{meas}_1\big\{t\in[0,T]:\mbox{meas}_2\{s\in A:|\mathcal L(s+it)-\phi(s)>\epsilon\}
	<\epsilon\big\} > 0,
$$
where $\mbox{meas}_1$ and $\mbox{meas}_2$ are respectively Lebesgue 1-measure and 2-measure. 
\end{theorem}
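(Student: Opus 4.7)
The plan is to adapt the construction in the proof of Theorem \ref{S-measure}, but now exploiting the full quantitative strength of Steuding's Theorem \ref{Steuding} (positive lower density of shifts) rather than merely existence of a single shift $t_n$.

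Fix $\epsilon > 0$. First I would produce, once and for all, a single compact approximant $K\subset A$ and a single zero-free holomorphic model $g$ on (a neighbourhood of) $K$ so that $|g(s)-\phi(s)|<\epsilon/2$ on $K$ and $\mathrm{meas}_2(A\setminus K)<\epsilon$. This is exactly the construction already carried out in the proof of Theorem \ref{S-measure}, but executed for the fixed tolerance $\epsilon$ rather than along a sequence $1/n$. Concretely: by Luzin's theorem pick a compact $A'\subset A$ on which $\phi$ is continuous and $\mathrm{meas}_2(A\setminus A')<\epsilon/3$; by \cite[Theorem 1.2]{GS17} thin $A'$ down to a compact $F\subset A'$ with $\mathbb R^2\setminus F$ connected (hence $\mathbb C\setminus F$ connected) and $\mathrm{meas}_2(A'\setminus F)<\epsilon/3$; take a sufficiently fine dyadic partition so that $\phi$ oscillates by less than $\epsilon/4$ on each piece $F\cap Q$, shrink each meeting square $Q$ to $\widetilde Q\Subset Q^{0}$ to obtain $K=\bigcup F\cap \widetilde Q$ with $\mathrm{meas}_2(F\setminus K)<\epsilon/3$, and finally define $g=\sum_Q b_Q\chi_{F\cap\widetilde Q}$, with each $b_Q\neq 0$ chosen within $\epsilon/4$ of $\phi(a_Q)$. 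Then $\mathbb C\setminus K$ is connected, $g$ is zero-free and holomorphic on a neighbourhood of $K$, $|g-\phi|<\epsilon/2$ on $K$ and $\mathrm{meas}_2(A\setminus K)<\epsilon$.

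Now I would invoke Theorem \ref{Steuding} directly with this compact set $K$, this zero-free function $g$, and tolerance $\epsilon/2$. The hypothesis of Steuding's theorem is satisfied, so
\[
	\liminf_{T\to\infty}\frac{1}{T}\,\mathrm{meas}_1\!\left\{t\in[0,T]:\max_{s\in K}|\mathcal L(s+it)-g(s)|<\epsilon/2\right\}>0.
\]
For any $t$ in the set on the left, the triangle inequality yields $|\mathcal L(s+it)-\phi(s)|<\epsilon$ for every $s\in K$, so
\[
	\{s\in A:|\mathcal L(s+it)-\phi(s)|>\epsilon\}\subset A\setminus K,
\]
and the right-hand side has 2-measure less than $\epsilon$. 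Hence every such $t$ belongs to the set whose density appears in the conclusion of the theorem, and its $\liminf$ lower density is at least the positive number furnished by Steuding's theorem.

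I do not anticipate a substantive obstacle, since every ingredient is already in place: the key technical step is the careful construction of the zero-free $g$ on a compact set with connected complement that simultaneously approximates $\phi$ in the supremum norm off a small exceptional subset of $A$. This is the only delicate point, and it was already resolved in the proof of Theorem \ref{S-measure}; recycling it for a fixed $\epsilon$ instead of a sequence $1/n$ is routine. The mild subtlety is merely one of bookkeeping: one must split the budget $\epsilon$ carefully between (a) the measure-theoretic loss $A\setminus K$ and (b) the sup-norm approximation error $|\phi-g|$ on $K$, so that the final Steuding shift plus the built-in approximation error still comes in under $\epsilon$.
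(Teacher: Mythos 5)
Your proposal is correct and follows essentially the same route as the paper's own proof: Luzin's theorem, a fine dyadic partition with shrunken squares to produce a compact $K$ with connected complement and a locally constant zero-free $g$ approximating $\phi$ uniformly on $K$, followed by a single application of Theorem \ref{Steuding} with the fixed tolerance. The only differences are cosmetic (your $\epsilon/4$ versus the paper's $\epsilon/3$ bookkeeping, and your explicit citation of \cite[Theorem 1.2]{GS17} where the paper simply asserts the complement of the Luzin set may be taken connected).
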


\begin{proof}
In the proof, we shall replace $\mbox{meas}_2$ by $m_2$ for brevity of notation. Let $\phi$ be a measurable function on a bounded  subset $\mathcal A$ of the strip $\frak S$ and choose  $\epsilon > 0.$ By Luzin's theorem, there is a compact subset 
$C\subset \mathcal A,$ such that the restriction  $\phi_C$ of $\phi$ on $C$ is continuous and $m_2(A\setminus C)<\epsilon/3.$ Moreover, we may assume that the complement of $C$ is connected. Since  $\phi_C$ is  continuous on $C$ and $C$ is compact, by uniform continuity there is a $\delta>0,$ such that, 
$$
	|s-s^\prime|<\delta, \quad s, s^\prime \in C \quad \implies \quad 
		|\phi_C(s)-\phi_C(s^\prime)|<\epsilon/3. 
$$
Consider a dyadic partition of the plane $\C$ so  that the diameters of each resulting square $Q$ is less than $\delta$.   Thus, for each square $Q$ of the partition, 
$$
	s, s^\prime \in  C\cap Q  \implies |\phi_Q(s)-\phi_Q(s^\prime)|<\epsilon/3.
$$

Let $Q_1, Q_2, \ldots, Q_n$ be the squares of the partition whose interior meet $C.$  For each $j=1, 2, \ldots, n,$ choose a point $s_j\in C\cap Q^0$ and let $\tilde Q_j$ be a closed square such that $s_j\in \tilde Q_j\subset Q_j^0$ and $m_2(\mathcal C\cap\tilde Q_j) > m_2(C\cap Q_j)-\epsilon/(3n).$ For $j=1, 2, \ldots, n,$ set $K_j = C\cap\tilde Q_j$ and  $K=K_1\cup K_2\cup\ldots\cup K_n.$ Then, $K$ is a compact subset of the strip $\frak S$ and the complement of $K$ is connected. 
For each $j=1, 2, \ldots, n,$ choose a value $v_j\not=0,$ such that $|v_j-\phi(s_j)|<\epsilon/3.$ We define a function $g$ continuous on  $K$ and  holomorphic on $K^\circ$  by setting $g(s) = v_j,$ for $s\in K_j.$ We have 
$$
	|g(s)-\phi(s)| < 2\epsilon/3, \quad \mbox{for all} \quad s\in K
$$
and 
$$
	m_2(\mathcal A\setminus K) < 2\epsilon/3.
$$
Now apply Theorem \ref{Steuding}, for this $K,$ this function $g$ and $\epsilon$ replaced by $\epsilon/3.$

\end{proof}

We conclude with some considerations regarding the Riemann Hypothesis.

\begin{theorem}\label{Bagchi}\cite{B}
The Riemann Hypothesis holds if and only if the Riemann zeta-function approximates itself 
uniformly in the sense of Theorem \ref{Steuding} on the right half $(1/2<\sigma<1)$ of the fundamental strip. 
\end{theorem}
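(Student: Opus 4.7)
The proof splits into two directions and both reduce to Theorem \ref{Steuding} (applied to $\mathcal L=\zeta$) combined with classical information about zeros of $\zeta.$ For the easy implication \emph{RH $\Rightarrow$ self-approximation}, I would observe that under RH $\zeta$ has no zeros in the strip $\frak S=\{1/2<\sigma<1\},$ so for any compact $K\subset\frak S$ with connected complement the restriction $\zeta|_K$ is a zero-free continuous function, holomorphic on $K^\circ.$ It therefore lies in the class of target functions permitted by Theorem \ref{Steuding}, and applying that theorem with $g=\zeta|_K$ gives precisely the self-approximation property.

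For the nontrivial converse I would argue by contradiction. Suppose $\zeta$ has a zero $\rho_0=\beta_0+i\gamma_0$ with $1/2<\beta_0<1.$ I would pick $r>0$ so small that $K:=\overline{B(\rho_0,r)}\subset\frak S,$ $\zeta$ has no other zero in $K,$ and $\zeta$ has no zero on $\partial K,$ and set $m:=\min_{s\in\partial K}|\zeta(s)|>0.$ Applying the hypothesis of self-approximation to this $K$ with $\epsilon=m/2$ produces $c>0$ such that for all large $T$ the set
$$
E(T):=\{t\in[0,T]:\max_{s\in K}|\zeta(s+it)-\zeta(s)|<m/2\}
$$
has measure at least $cT.$ For every $t\in E(T),$ on $\partial K$ we have $|\zeta(s+it)-\zeta(s)|<m/2<|\zeta(s)|,$ so Rouch\'e's theorem forces $s\mapsto\zeta(s+it)$ to have the same number of zeros in $K^\circ$ as $\zeta,$ namely at least one. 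Equivalently, $\zeta$ has a zero in the translated disc $K+it.$

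To extract a contradiction I would count these zeros by Fubini. Writing $N(\sigma,T)$ for the usual zero-counting function of $\zeta,$
$$
cT\le \int_0^T \#\{\rho:\zeta(\rho)=0,\ \rho\in K+it\}\,dt \le 2r\cdot N(\beta_0-r,\,T+r),
$$
since each zero $\rho$ with $|\mathrm{Re}\,\rho-\beta_0|\le r$ contributes a $t$-set of length at most $2r.$ This forces $N(\beta_0-r,T)\gg T,$ which for $\beta_0-r>1/2$ contradicts the classical density bound $N(\sigma,T)=o(T)$ (Bohr--Landau, sharpened by Carlson) valid for every $\sigma>1/2.$

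The argument is structurally short: the only substantial analytic input beyond Theorem \ref{Steuding} is the density estimate $N(\sigma,T)=o(T)$ for $\sigma>1/2,$ which is available off-the-shelf. The main conceptual point, and the step most worth highlighting, is choosing the right test function to feed into self-approximation: taking $g=\zeta|_K$ on a small disc around a putative off-line zero is what forces zeros to propagate into a positive-density family of vertical translates via Rouch\'e, and the Fubini-plus-density-bound bookkeeping then does the rest.
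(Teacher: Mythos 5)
Your proof is correct, and it is essentially the standard argument: the paper itself gives no proof of this statement, citing Bagchi \cite{B}, and what you have written (RH plus Theorem \ref{Steuding} for the forward direction; Rouch\'e on a small disc around a hypothetical off-line zero, integrated over the positive-density set of good $t$ and compared with the Bohr--Landau bound $N(\sigma,T)=o(T)$ for $\sigma>1/2$, for the converse) is precisely Bagchi's proof. The only cosmetic point is that your zero-count should read $N(\beta_0-r,\,T+\gamma_0+r)$ rather than $N(\beta_0-r,\,T+r)$, which changes nothing asymptotically.
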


\begin{theorem}
The hypothesis that the function to be approximated is zero-free cannot be omitted in Theorem \ref{Steuding}.
\end{theorem}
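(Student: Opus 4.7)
The plan is to argue by contradiction, combining Rouché's theorem with a classical upper bound on the density of zeros of $\mathcal L$ in the universality strip.

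First I would pick any $s_0$ in the open strip $\frak S=(\sigma_m<\sigma<1)$ and a radius $r>0$ small enough that $K:=\overline{D(s_0,r)}\subset\frak S$. The candidate test function is $g(s)=s-s_0$: it is entire, in particular continuous on $K$ and holomorphic in its interior, and $\mathbb C\setminus K$ is connected, so it satisfies every hypothesis of Theorem \ref{Steuding} \emph{except} being zero-free, since it has one simple zero at $s_0\in K^\circ$. Assuming for contradiction that the zero-free hypothesis can be dropped, applying the conclusion with this $g$ and $\epsilon=r/2$ would yield a real $\delta>0$ and, for all $T$ large enough, a set
$$
E_T=\Big\{t\in[0,T]:\max_{s\in K}|\mathcal L(s+it)-(s-s_0)|<r/2\Big\}
$$
with $\mbox{meas}(E_T)\ge \delta T$.

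Next I would exploit Rouché's theorem. For each $t\in E_T$ and each $s\in\partial K$ one has $|g(s)|=r> r/2>|\mathcal L(s+it)-g(s)|$, so $\mathcal L(\cdot+it)$ has exactly one zero inside $D(s_0,r)$; equivalently, $\mathcal L$ has a zero in the translated disk $D(s_0+it,r)$. Selecting from $E_T$ a maximal $2r$-separated subset, of cardinality at least $\delta T/(4r)$, gives pairwise disjoint disks each carrying a distinct zero of $\mathcal L$. With $\sigma_1=\Re s_0-r>\sigma_m$, this would force
$$
N_{\mathcal L}(\sigma_1,T)\ge c\,T
$$
for some constant $c>0$ and all large $T$, where $N_{\mathcal L}(\sigma_1,T)$ counts zeros of $\mathcal L$ in $\{\sigma>\sigma_1,\ 0<t\le T\}$.

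Finally I would invoke the density estimate available in the Steuding class: for every $\sigma>\sigma_m$, functions $\mathcal L\in\tilde{\mathcal S}$ satisfy $N_{\mathcal L}(\sigma,T)=o(T)$ as $T\to\infty$ (this is the standard Bohr--Landau-type bound, generalised to $\tilde{\mathcal S}$ using the mean-square axiom and the polynomial Euler product, see \cite{S}). This directly contradicts the linear lower bound just derived, completing the proof. The main obstacle is pinning down the precise density-of-zeros statement for the abstract class $\tilde{\mathcal S}$: for $\zeta$ it is classical, and for $\tilde{\mathcal S}$ it is the crucial ingredient that converts the analytic ``approximation of a function with a zero'' obstruction into a quantitative contradiction. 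Once that ingredient is cited, the Rouché step is routine.
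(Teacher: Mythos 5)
Your argument is correct, but it follows a genuinely different route from the paper's. The paper derives two incompatible consequences for the Riemann zeta-function specifically: if the zero-free hypothesis could be dropped, then $\zeta$ would approximate itself on the right half of the critical strip, so by Bagchi's theorem (Theorem \ref{Bagchi}) the Riemann Hypothesis would hold; on the other hand, $\zeta$ would also approximate $s-3/4$, and a single application of Rouch\'e's theorem would then produce a zero of $\zeta$ in the strip, so the Riemann Hypothesis would fail. Your proof replaces the appeal to Bagchi by a quantitative zero-counting argument: Rouch\'e applied to a positive-density set of translates $t\in E_T$, together with the $2r$-separation trick, yields $\gg T$ zeros of $\mathcal L$ with real part greater than $\sigma_1$, contradicting a Bohr--Landau-type density bound $N_{\mathcal L}(\sigma,T)=o(T)$. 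This buys you two things: the argument makes no detour through the Riemann Hypothesis, and it works verbatim for every $\mathcal L\in\tilde{\mathcal S}$, not just $\zeta$ (though exhibiting the failure for $\zeta$ alone already suffices for the stated non-omittability, which is presumably why the paper is content with it). The one point to nail down is exactly the one you flag: the paper itself, in the proof that Conjectures \ref{Steuding fewer zeros} and \ref{Mergelyan fewer zeros} are equivalent, cites the zero-density results in \cite{S} only in the form ``there exists $\sigma_*$ with $\sigma_m<\sigma_*<1$ such that $N_{\mathcal L}(\sigma_*,T)=o(T)$,'' not for every $\sigma>\sigma_m$. So you should simply place your disk $\overline{D(s_0,r)}$ in the substrip $\sigma_*<\sigma<1$ (and discard the finitely many $t_i<r$ so the translated disks stay in the upper half-plane). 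With those small adjustments the proof is complete.
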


\begin{proof}
Suppose we could omit the zero-free hypothesis. There would be two
incompatible consequences. \\
1) By Theorem \ref{Bagchi} the Riemann Hypothesis would hold. \\
2) We could approximate the function $s-3/4,$ which invoking Rouché's theorem would imply that the Riemann zeta-function has a zero in the fundamental strip. So the Riemann Hypothesis would fail. 
\end{proof}

{\bf Remark.} The Riemann zeta-function does approximate itself in the sense of Theorem \ref{Voronin measure} (a weak measure-theoretic analogue of Theorem \ref{Steuding}) on the right half $(1/2<\sigma<1)$ of the fundamental strip. 

{\bf Remark.} In Theorem \ref{Voronin measure} there is no assumption that the function to be approximated is bounded away from zero. 

In view of the last theorems and remarks, establishing a stronger version of Theorem \ref{Voronin measure} might yield new information regarding the Riemann Hypothesis. For example, consider the following conjecture which involves the Selberg class. We recall that the Selberg class is the set of Dirichlet series satisfying:
\begin{itemize}
\item[(i)] \textit{Ramanujan hypothesis:}  $a(n) \ll n^\epsilon$ for every $\epsilon > 0$, where the implicit constant may depend on $\epsilon$,
\item[(ii)] \textit{Holomorphic continuation:} There exists a non-negative integer $k$ such that $(s- 1)^k \mathcal L(s)$ is an entire function of finite order,
\item[(iii)] \textit{Functional equation:}  $\mathcal L(s)$ satisfies a functional equation of type
\[
	\Lambda_{\mathcal L}(s)=\omega\overline{\Lambda_{\mathcal L}(1-\overline{s}})
\]
where
\[
	\Lambda_{\mathcal L}:=\mathcal L(s)Q^s\prod_{j=1}^f\Gamma(\lambda_js+\mu_j)
\]
with positive real numbers $Q$, $\lambda_j$, and complex numbers $\mu_j,\omega$ with $\Re \mu_j\geq0$ and $\vert \omega\vert=1$.
\item[(iv)] \textit{Euler product:} $\mathcal L(s)$ has a product representation
\[
	\mathcal L(s)=\prod_p \mathcal L_p(s)
\]
where 
\[
\mathcal L_p(s)=exp\Big(\sum_{k=1}^\infty \frac{b(p^k)}{p^{ks}}\Big)
\]
with suitable coefficients $b(p^k)$ satisfying $b(p^k)\ll p^{k\theta}$ for some $\theta < 1/2$.
\end{itemize}

\begin{conjecture}\label{Steuding fewer zeros} Let $S$ be the Selberg  and
let $\mathcal L\in \widetilde{\mathcal S}\cap S, \, K$ be a compact subset of the strip $\frak S=\{ \sigma_m<\sigma <1\}$ with connected complement, and let $g$ be a continuous function on $K,$ holomorphic in $K^0$ and having no isolated zeros in $K^0.$
Then, for every $\epsilon > 0,$ 
$$
	\liminf_{T\to\infty}\frac{1}{T}
\mbox{meas}\left\{t\in[0,T]:\max_{s\in K}|\mathcal L(s+it)-g(s)|<\epsilon\right\} > 0.
$$ 
\end{conjecture}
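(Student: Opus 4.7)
The plan is to attempt to reduce the conjecture to Steuding's theorem (Theorem \ref{Steuding}) by pre-processing $g$ into a zero-free approximant. A preliminary observation: since $g$ is holomorphic on $K^0$, the identity principle forces $g|_U$, on each connected component $U$ of $K^0$, to be either identically zero or to have only isolated zeros. The hypothesis of no isolated zeros in $K^0$ therefore means $g$ is zero-free on every component where it is not identically zero. Thus the zero set of $g$ in $K$ is the union of a (possibly empty) collection of whole connected components of $K^0$ together with some subset of $\partial K$.

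With this structural information, the strategy is as follows. First, construct a sequence $g_n$ of functions continuous on $K$, holomorphic on $K^0$, and \emph{zero-free on $K$}, with $g_n \to g$ uniformly on $K$. Second, apply Steuding's theorem to each $g_n$: given $\epsilon>0$, choose $n_0$ with $\|g_{n_0}-g\|_K<\epsilon/2$; Theorem \ref{Steuding} then yields
$$
\liminf_{T\to\infty}\frac{1}{T}\,\mbox{meas}\bigl\{t\in[0,T]:\max_{s\in K}|\mathcal L(s+it)-g_{n_0}(s)|<\epsilon/2\bigr\}>0,
$$
and the triangle inequality upgrades this to the same statement with $g$ in place of $g_{n_0}$ and $\epsilon$ in place of $\epsilon/2$, yielding the conjecture. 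The simplest candidate for the $g_n$ would be obtained by perturbing $g$ on its vanishing components by a small nonvanishing term, e.g.\ via $g_n = g + c_n h_n$ with $h_n$ a suitably chosen zero-free holomorphic function and $c_n \to 0$; care must be taken so that on the components where $g$ is already zero-free the perturbation does not introduce zeros.

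The main obstacle is precisely the existence of the approximants $g_n$. This is a ``strong Mergelyan'' problem: to approximate, uniformly on $K$ by holomorphic functions that are zero-free on $K$, any continuous function on $K$ which is holomorphic on $K^0$ and has no isolated zeros there. By Andersson's equivalence \cite{A13} flagged in the introduction, this Mergelyan-type refinement is essentially equivalent to the very universality strengthening we are trying to prove, so the reduction above is in a sense circular. A genuine attack on the conjecture therefore seems to demand either a direct proof of the strong Mergelyan statement, or an approach that exploits the extra Selberg-class hypotheses (functional equation, Ramanujan bounds, Euler product of $\mathcal L \in S\cap\widetilde{\mathcal S}$) to bypass Mergelyan entirely — for instance by using zero-density estimates for $\mathcal L$ to manufacture, on vertical translates $s\mapsto \mathcal L(s+it)$, the behaviour needed on the vanishing components of $K^0$ directly from clusters of zeros of $\mathcal L$ in the critical strip.
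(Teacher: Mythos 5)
The statement you are asked to prove is labelled a \emph{conjecture} in the paper, and the paper does not prove it: it only establishes (following Andersson \cite{A13}) that Conjecture \ref{Steuding fewer zeros} is \emph{equivalent} to the Mergelyan-type Conjecture \ref{Mergelyan fewer zeros} on zero-free polynomial approximation. Your proposal correctly identifies this state of affairs. Your structural observation about the zero set of $g$ is right: by the identity principle the hypothesis of ``no isolated zeros in $K^0$'' means $g$ vanishes identically on some components of $K^0$ and is zero-free on the rest, so the zero set is a union of whole components of $K^0$ plus boundary points. Your proposed reduction --- uniformly approximate $g$ on $K$ by functions zero-free on $K$, then invoke Theorem \ref{Steuding} and the triangle inequality --- is exactly the easy direction of the paper's equivalence theorem (Conjecture \ref{Mergelyan fewer zeros} $\Rightarrow$ Conjecture \ref{Steuding fewer zeros}), and you are right to flag it as circular: the existence of the zero-free approximants $g_n$ is precisely the open Conjecture \ref{Mergelyan fewer zeros}, not something that can be manufactured by an ad hoc perturbation $g + c_n h_n$. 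Indeed, on a component where $g$ is zero-free but $\inf_K |g|$ is not controlled (e.g.\ $g$ vanishes on $\partial K$), no choice of small $c_n$ obviously avoids creating new zeros, and on components where $g\equiv 0$ the perturbation must match continuously across $\partial K$ with the unperturbed part; this gluing is the whole difficulty.

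So the gap is not an error in your reasoning but the absence of a proof: the conjecture remains open, and your reduction does not advance beyond what the paper itself proves. What the paper actually does with this statement --- and what a complete answer here would have to reproduce --- is the \emph{converse} direction as well: assuming Conjecture \ref{Steuding fewer zeros}, it derives Conjecture \ref{Mergelyan fewer zeros} by combining the hypothesized universality with zero-density estimates for $\mathcal L\in S\cap\widetilde{\mathcal S}$ (the bound $N_{\mathcal L}(\sigma_*,T)=o(T)$) to find a single translate $t$ for which $\mathcal L(\cdot+it)$ both approximates $g$ and is zero-free on $K_*$, then applies the classical Mergelyan theorem to that translate. Your closing suggestion to ``exploit zero-density estimates'' points in the right direction, but in the paper that machinery is used only to prove the equivalence of the two conjectures, not to prove either one. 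You should present your argument as a conditional reduction (or as one half of the equivalence), not as a proof of the conjecture.
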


Our hope that this conjecture would provide information on the Riemann Hypothesis may or may not be justified, however this conjecture is certainly interesting for the following reason. Johan Andersson \cite{A13} has shown the remarkable fact that, for the case where $\mathcal L$ is the Riemann zeta-function, this conjecture is equivalent to the following conjecture regarding a natural strengthening of the most important theorem on polynomial approximation, Mergelyan's theorem. 

\begin{conjecture} \label{Mergelyan fewer zeros}
Let   $K$ be a compact subset of $\C$ with connected complement, and let $g$ be a continuous function on $K,$ holomorphic in $K^0$ and having no isolated zeros in $K^0.$ Then, there is a sequence of polynomials $p_j,$ zero-free on $K,$ such that $p_j\to f$ uniformly on $K.$ 

\end{conjecture}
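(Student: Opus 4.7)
The plan is to reduce Conjecture \ref{Mergelyan fewer zeros} to a controlled perturbation of a classical Mergelyan approximation. Apply Mergelyan's theorem to obtain polynomials $P_n\to g$ uniformly on $K$. The $P_n$ may vanish on $K$, and the task is to produce zero-free polynomials $\widetilde P_n$ which still converge to $g$. The regime where $g$ is bounded away from $0$ on $K$ is classical (approximate $\log g$ by a polynomial after a logarithm is chosen, then exponentiate and reapproximate), so the essential difficulty is concentrated near the zero set $Z=g^{-1}(0)\cap K$, which by hypothesis has no isolated points in $K^0$.

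A natural first move is to reduce to the case $K=\overline{K^0}$ by slightly thickening $K^0$: any isolated zero of $g$ on $\partial K\setminus\overline{K^0}$ can be absorbed by enlarging $K$ slightly and extending $g$ via Tietze so that the zero becomes non-isolated, without losing connectedness of the complement. Next, fix $\epsilon>0$ and split $K$ into the ``large-modulus'' compact set $K_1=\{z\in K:|g(z)|\geq\epsilon\}$ and the ``small-modulus'' compact set $K_2=\{z\in K:|g(z)|\leq 2\epsilon\}$. On (a mild regularization of) $K_1$, $g$ is zero-free with connected complement, so the logarithm trick yields a zero-free polynomial $P$ approximating $g$. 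On $K_2$ one only needs a zero-free polynomial $Q$ with $|Q|<3\epsilon$ there, which can be taken of the form $\eta e^{R}$ for a small $\eta$ and a polynomial $R$ whose real part is suitably negative on $K_2$, once more by Mergelyan.

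The decisive step is to glue $P$ on $K_1$ and $Q$ on $K_2$ into a single zero-free polynomial $\widetilde P$ with $|\widetilde P-g|<\epsilon$ on $K$. This would require a zero-free analogue of Roth's fusion lemma: one seeks $\widetilde P$ of the form $\exp(S)$ where $S$ is a polynomial approximating $\log P$ on $K_1$ and $\log Q$ on $K_2$ with compatible branches. On the overlap $K_1\cap K_2$ both target functions take values in the annulus $\epsilon\leq|w|\leq 2\epsilon$, and the obstruction to choosing a consistent branch is controlled by the winding of $g$ around $0$ along small loops; the non-isolation hypothesis on $Z$ is what prevents such loops from acquiring nonzero winding around isolated zeros in $K^0$. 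Vitushkin-type localization could then be used to assemble $S$ as a polynomial from the local pieces.

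The main obstacle, and the reason the statement is a conjecture rather than a theorem, is precisely this fusion step. Standard fusion lemmas permit arbitrary small perturbations, and these perturbations generically introduce spurious zeros exactly inside $K_2$, where the target is close to zero; one needs a markedly more rigid construction controlling $\arg\widetilde P_n$ in tandem with $|\widetilde P_n|$. Moreover, Andersson's equivalence \cite{A13}, extended in the present paper to $\widetilde{\mathcal S}\cap S$, ties this conjecture to a refinement of Voronin universality related to the Riemann Hypothesis, so a full proof will almost certainly require tools beyond the classical Mergelyan/Vitushkin/Roth framework. The above proposal should therefore be read as localizing the difficulty at the fusion step rather than as a roadmap to a complete argument.
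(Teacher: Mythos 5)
This statement is Conjecture \ref{Mergelyan fewer zeros}: it is an open problem, the paper supplies no proof of it, and your proposal --- quite correctly --- does not claim to supply one either. There is therefore no proof of the paper's to compare yours against; what can be assessed is whether your analysis of the difficulty matches what the paper actually establishes around this conjecture.

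It does, rather closely. Your decomposition of $K$ into a large-modulus set $\{|g|\ge\epsilon\}$ and a small-modulus set $\{|g|\le 2\epsilon\}$ is essentially the decomposition the paper uses (its sets $B_j=\{|g|\ge 2/j\}$ and $A_j=\{|g|\le 1/j\}$) in its final corollary, where it proves only the weaker statement that zero-free polynomials approximate $g$ \emph{in measure} on $K$. The paper sidesteps your fusion step entirely: instead of gluing the two approximants into one uniform approximant, it cuts thin rectangles joining the complementary components of $A_j\cup B_j$ to $\C\setminus K$, discards the resulting set of small measure, replaces $g$ by the constant $1/j$ on the remaining small-modulus piece, and applies Mergelyan once to a compact set with connected complement on which the target is bounded away from zero. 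This confirms your diagnosis: the whole obstruction is that a \emph{uniform} gluing must control the argument of the approximant across the region where $|g|$ is small, and no available fusion lemma does this without creating zeros there. Your closing point is likewise consistent with the paper's standpoint: through the equivalence with Conjecture \ref{Steuding fewer zeros} (Andersson's theorem, extended in the paper to $\widetilde{\mathcal S}\cap S$), a proof of this conjecture would carry genuine number-theoretic content, so it is unlikely to follow from the classical Mergelyan--Vitushkin--Roth toolkit alone. In short: no gap to report beyond the one you yourself flag, namely that this is not a proof; but be explicit that what you \emph{can} salvage from your decomposition is exactly the in-measure version that the paper proves.
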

We show now that Andersson's result extend to functions in the intersection $\widetilde{\mathcal S}\cap S$ of the Steuding class with the Selberg class. 

\begin{theorem} Conjecures \ref{Steuding fewer zeros} and \ref{Mergelyan fewer zeros} are equivalent.
\end{theorem}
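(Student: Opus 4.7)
My plan is to handle the two implications separately, since they have very different difficulty and use different ingredients.

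\textbf{(Conjecture \ref{Mergelyan fewer zeros} implies Conjecture \ref{Steuding fewer zeros}.)} Given $K\subset\mathfrak{S}$ compact with connected complement, $g$ continuous on $K$, holomorphic in $K^0$, with no isolated zeros in $K^0$, and $\epsilon>0$, I would first invoke Conjecture \ref{Mergelyan fewer zeros} to produce a polynomial $p$ that is \emph{zero-free} on $K$ with $\max_K|p-g|<\epsilon/2$. Because $p$ is zero-free, continuous on $K$, and holomorphic in $K^0$, Steuding's Theorem \ref{Steuding} applies to $p$ and yields
\[
\liminf_{T\to\infty}\frac{1}{T}\mathrm{meas}\bigl\{t\in[0,T]:\max_{s\in K}|\mathcal{L}(s+it)-p(s)|<\epsilon/2\bigr\}>0.
\]
A triangle inequality upgrades this into the desired Steuding-type lower bound for approximations of $g$.

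\textbf{(Conjecture \ref{Steuding fewer zeros} implies Conjecture \ref{Mergelyan fewer zeros}.)} Here I would follow Andersson's original strategy, adapted to $\mathcal{L}\in\widetilde{\mathcal{S}}\cap S$. Given $K\subset\mathbb{C}$ compact with connected complement and $g$ satisfying the Mergelyan hypotheses, an affine change of variable $s\mapsto\lambda s+s_0$, which sends polynomials to polynomials and preserves zero-freeness, reduces the problem to the case $K\subset\mathfrak{S}$. Conjecture \ref{Steuding fewer zeros} then supplies a real shift $t$ with $\max_K|\mathcal{L}(s+it)-g(s)|<\epsilon/3$, and it remains to approximate $\mathcal{L}(\cdot+it)$ on $K$ by a \emph{zero-free} polynomial.

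This is where the combined hypothesis $\mathcal{L}\in\widetilde{\mathcal{S}}\cap S$ becomes indispensable. Because $\mathcal{L}$ lies in the Selberg class, the Ramanujan hypothesis forces $|\alpha_j(p)|\le 1$, so the truncated polynomial Euler products
\[
\mathcal{L}_N(s)=\prod_{p\le N}\prod_{j=1}^{m}\bigl(1-\alpha_j(p)p^{-s}\bigr)^{-1}
\]
are holomorphic and zero-free throughout $\Re s>0$, and in particular on a neighborhood of $K$. Because $\mathcal{L}$ also lies in the Steuding class, the mean-square/equidistribution machinery underlying Voronin's proof of universality is available in $\mathfrak{S}$; using it, I would refine the choice of $t$ inside the positive-density set produced by Conjecture \ref{Steuding fewer zeros} so that, for some sufficiently large $N$, one also has $\max_K|\mathcal{L}_N(s+it)-\mathcal{L}(s+it)|<\epsilon/3$. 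Classical Mergelyan applied to the zero-free holomorphic function $\mathcal{L}_N(\cdot+it)$ on $K$ then yields a polynomial $p$ with $\max_K|p-\mathcal{L}_N(\cdot+it)|<\epsilon/3$; since $\mathcal{L}_N(\cdot+it)$ is bounded away from $0$ on $K$, Hurwitz's theorem guarantees $p$ itself is zero-free on $K$. Three triangle inequalities deliver $\max_K|p-g|<\epsilon$, as required.

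The main obstacle I expect is the simultaneous selection of $t$ in the last step: the shift must be \emph{both} a good Steuding-type return time for $g$ \emph{and} a good Birkhoff-type return time for the Euler-product truncation $\mathcal{L}_N$. Executing this requires transporting into $\widetilde{\mathcal{S}}\cap S$ the full apparatus that Voronin uses for $\zeta$, namely joint equidistribution of $\{(t\log p)/(2\pi)\}_{p\ \mathrm{prime}}$ modulo $1$ together with mean-square bounds for $\mathcal{L}(\sigma+it)-\mathcal{L}_N(\sigma+it)$ on vertical lines in $\mathfrak{S}$. Without the Ramanujan hypothesis from $S$ the truncated Euler product could fail to be zero-free in the strip, and without the Steuding mean-square structure the approximation of $\mathcal{L}$ by $\mathcal{L}_N$ inside $\mathfrak{S}$ would not be accessible; this is precisely why the theorem is stated for the intersection $\widetilde{\mathcal{S}}\cap S$.
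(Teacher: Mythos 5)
Your first implication (Conjecture \ref{Mergelyan fewer zeros} $\Rightarrow$ Conjecture \ref{Steuding fewer zeros}) is correct and is exactly the paper's argument: approximate $g$ by a zero-free polynomial, apply Theorem \ref{Steuding} to that polynomial, and use the triangle inequality. The paper states this even more tersely, simply noting that Andersson's argument goes through with Theorem \ref{Steuding} in place of Voronin's theorem.

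For the converse implication you take a genuinely different route from the paper, and it has a real gap at its crux. After using Conjecture \ref{Steuding fewer zeros} to get a positive lower density of good shifts $t$, you still must produce, from within that set, a single $t$ for which $\mathcal L(\cdot+it)$ can be uniformly approximated on $K$ by something zero-free. Your plan is to use truncated Euler products $\mathcal L_N$, which requires showing that for suitable $N$ the set of $t\in[0,T]$ with $\max_K|\mathcal L_N(s+it)-\mathcal L(s+it)|\ge\epsilon/3$ has upper density strictly smaller than the lower density $\delta$ supplied by the conjecture. You correctly identify this as the main obstacle, but you do not prove it; it amounts to re-establishing the approximation-in-the-mean step of the universality machinery (mean-square bounds for $\mathcal L-\mathcal L_N$ in the strip plus a passage from $L^2$ to sup norms on compacta), and as written the proof is incomplete at precisely this point. (A small side remark: a finite Euler product is never zero where it is defined; the Ramanujan bound $|\alpha_j(p)|\le 1$ is needed to exclude \emph{poles} of $\mathcal L_N$ in $\Re s>0$, not zeros.) The paper sidesteps all of this: it invokes the Kaczorowski--Perelli zero-density estimate for the Selberg class, $N_{\mathcal L}(\sigma_*,T)=o(T)$ for some $\sigma_m<\sigma_*<1$, to show that all but a set of $t$ of density $<\delta$ yield a shift $\mathcal L(\cdot+it)$ that is \emph{already} zero-free on the rectangle containing $\ell(K)$. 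One then applies classical Mergelyan directly to the zero-free holomorphic function $\mathcal L(\cdot+it)$ and uses Hurwitz (a uniform limit bounded away from zero) to make the polynomials zero-free. This is both shorter and explains where the hypothesis $\mathcal L\in S$ is actually used in the paper --- through zero-density estimates rather than through the Ramanujan bound on the Euler factors. If you want to salvage your route, you must either prove the positive-proportion Euler-product approximation lemma in $\widetilde{\mathcal S}$ or replace that step by the zero-density argument.
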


\begin{proof} To show that Conjecture \ref{Mergelyan fewer zeros} implies Conjecture \ref{Steuding fewer zeros} note that Anderson's argument used for the case of the Riemann zeta-function also works for the general case $\mathcal L \in \tilde S$ if we replace in the original proof of Anderson's the use  of the Voronin Universality Theorem by Theorem \ref{Steuding}.

We show that  Conjecture \ref{Steuding fewer zeros} implies Conjecture \ref{Mergelyan fewer zeros}. It follows from the zero-density results  of Kachotowdki and Perelli (see \cite[Ch. 8, p. 160]{S}) that There is a $\sigma_*,$ with $\sigma_m<\sigma_*<1,$ such that 
\begin{equation}\label{o}
	N_\mathcal L(\sigma_*,T) = o(T),
\end{equation}
where 
$$
	N_\mathcal L(\sigma_*,T) = \#\{s: \mathcal L(s)=0, \, \Re s\ge\sigma_*, \, 0\le\Im s\le T\}.
$$

For fixed $m\in \N,$ set $I_j=\{t:(j-1)m<t<jm\},$ and denote by $\nu(n)$ the number of intervals $I_j,$ among $I_1, I_2, \ldots, I_n$ for which the function $\mathcal L$ is zero free on $\{s: \sigma_*\le \sigma, \,  t\in I_j\}.$  

We claim that for each fixed $0<\lambda<1,$ there exists an $n_{m,\lambda},$ such that 
$\nu(n)/n>\lambda,$ for all $n>n_{m,\lambda}.$ Suppose this is not the case. Then, there are arbitrarily large values of $n,$ such that on the interval $[0,mn]$ there are at least $\lambda$ many disjoint intervals of length $m,$ on each of which $\mathcal L$ has a zero. Thus, for arbitrarily large values of $n,$
$$
	\frac{N_\mathcal L(\sigma_*,mn)}{mn} \ge \frac{n-\nu(n)}{mn}\ge \frac{1-\lambda}{m} > 0.
$$ 
This contradicts (\ref{o}) and establishes the claim.

Imitating the proof in \cite{A13}, we can choose  a linear transformation $\ell,$ such that $K_*=\ell(K)$ lies in the rectangle $\frak S^+_*=\{s: \sigma_*<\sigma<1, \, 0< t < m \}.$ Fix $\epsilon>0.$ If Conjecture  \ref{Steuding fewer zeros} holds, then  there is a $\delta>0,$ such that for all sufficiently large $n,$ 
\begin{equation}\label{delta}
	\frac{1}{mn}
	meas\{t\in [0,mn]:  \max_{s\in K_*}|\mathcal L(s+it)-g(\ell^{-1}(s))|>\epsilon\} 
	> \delta. 
\end{equation}
According to the claim, we can choose $\nu(n),$ such that $\nu(n)/n>1-\delta,$ for all $n>n_{m,1-\delta}.$ Thus, 
$$
	\frac{1}{mn}
	meas\{t\in[0,mn]; s\in(\sigma_*,1)\times[0,m], \, \mathcal L(s+it)\not=0\} = \frac{\nu(n)\cdot m} 	{mn}>1-\delta. 
$$
In particular, 
\begin{equation}\label{1-delta}
	\frac{1}{mn}
	meas\{t\in[0,mn]; s\in K_*, \, \mathcal L(s+it)\not=0\} >1-\delta. 
\end{equation}
It follows from (\ref{delta}) and (\ref{1-delta}) that there is a $t\in [0,mn]$ such that $\mathcal L(s+it)\not=0$ for all $s\in K_*$ and $|\mathcal L(s+it)-g(\ell^{-1}(s))|<\epsilon,$ for all $s\in 
K_*.$  Thus, $|\mathcal L(\ell(z)+it)-g(z)|<\epsilon,$ for $z\in K.$ By Mergelyan's theorem, there is a sequence of polynomials $p_k(z)$ which converges uniformly to  $\mathcal L(\ell(z)+it)$ on $K.$ For sufficiently large $k,$ $p_k$ is zero-free on $K,$ and $|p_k(z)-\mathcal  L(\ell(z)+it)|<\epsilon,$ for all $z\in K.$ By the triangle inequality, we have a polynomial $p,$ which is zero-free on $K$ and such that $|p(z)-g(z)|<2\epsilon,$ for all $z\in K.$ Thus Conjecture \ref{Mergelyan fewer zeros} holds. 
\end{proof}

\smallskip
For a Riemann surface $M,$ we denote by $Hol(M)$ the family of functions holomorphic on $M.$
The following is a complex analogue of Theorem \ref{GS}.

\begin{theorem}[{\cite[Corollary 1.4]{GS17}}]\label{GS-complex}
For every measurable subset $E$ of a non-compact Riemannian surface $M,$ and for every
measurable function $v : E\rightarrow \mathbb R,$ there exists a sequence $u_j\in Hol(M),$ such that $u_j \rightarrow v$ in measure.
\end{theorem}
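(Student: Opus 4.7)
The plan is to mirror, in the holomorphic category on a Riemann surface, the strategy underlying the harmonic statement of Theorem~\ref{GS}. The essential obstruction here is that a real-valued continuous function on a set of positive area cannot in general be approximated uniformly by holomorphic functions. The key observation that makes the result possible is that, since we are only approximating in measure, we have the freedom to discard from $E$ a set of arbitrarily small measure, and in particular to restrict attention to a subset with empty interior, which eliminates the obstruction entirely.

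The sequence $u_j$ will be produced in a single construction with parameter $\varepsilon=1/j$. Given $\varepsilon>0$, first I would apply Lusin's theorem to produce a compact $F\subset E$ with $\mu(E\setminus F)<\varepsilon$ on which $v$ is continuous. Next, using the regularity of the measure and a construction analogous to the compact-exhaustion step in the proof of Theorem~\ref{S-measure}, I would carve out a compact subset $K\subset F$ with $\mu(F\setminus K)<\varepsilon$ enjoying two additional topological properties: $K$ has empty interior in $M$, and $M^{*}\setminus K$ is connected and locally connected; writing $K$ as a locally finite union of disjoint compact pieces, Theorem~\ref{Lemma 13} then guarantees that $K$ is Runge-Carleman in $M$. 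Extending $v|_K$ continuously to a neighbourhood of $K$, and invoking a Mergelyan/Vitushkin-type theorem on the open Riemann surface $M$ (whose hypotheses are met because $K$ has empty interior), one obtains a function holomorphic on a neighbourhood of $K$ that approximates $v$ uniformly within $\varepsilon$ on $K$. Finally, because $K$ is Runge-Carleman, this neighbourhood-holomorphic function is itself uniformly approximable on $K$, within $\varepsilon$, by a globally holomorphic $u_j\in Hol(M)$. Then $\mu\{z\in E:|u_j(z)-v(z)|>2\varepsilon\}<2\varepsilon$, and letting $\varepsilon=1/j\searrow 0$ yields a sequence $u_j\in Hol(M)$ with $u_j\to v$ in measure on $E$.

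The main obstacle will be the simultaneous achievement of the three properties of $K$ in the second step: $K$ must carry almost all the measure of $F$ (a measure-theoretic requirement), it must have empty interior (so that continuous functions on it admit uniform holomorphic approximation), and its complement in $M^{*}$ must be connected and locally connected so that the Runge-Carleman property of Theorem~\ref{Lemma 13} applies. Building such a ``fat but thin'' compact set — one of nearly full two-dimensional measure in $F$, yet topologically thin enough to admit both Vitushkin-type uniform approximation and Runge-type global extension — is the technical heart of the argument, and is essentially the construction carried out in~\cite{GS17}.
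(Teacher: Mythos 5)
This theorem is not proved in the paper at all: it is quoted verbatim from \cite[Corollary 1.4]{GS17}, so the only fair comparison is with the proof there and with the closely analogous arguments that do appear in this paper (the proof of Theorem \ref{S-measure} and the final corollary of the last section). Those arguments never invoke a Lavrentiev--Vitushkin theorem. After Lusin's theorem and after discarding a set of small measure so that the remaining compact set $K$ has connected complement (this is \cite[Theorem 1.2]{GS17}), one covers $K$ by a fine dyadic partition, shrinks to finitely many disjoint compact pieces $K\cap\widetilde Q$ carrying almost all the measure, and replaces $v$ by a function that is \emph{constant} on each piece --- hence automatically holomorphic on a neighbourhood of their disjoint union --- before applying a Runge--Behnke--Stein theorem to pass to $Hol(M)$. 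Your route instead forces $K$ to have empty interior so that continuous functions on $K$ admit uniform holomorphic approximation. That is a genuinely different, and somewhat heavier, mechanism, though it can be made to work.

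Two points need repair if you pursue your version. First, ``a Mergelyan/Vitushkin-type theorem \ldots whose hypotheses are met because $K$ has empty interior'' is not a correct justification: empty interior alone does not give $R(K)=C(K)$ (Swiss-cheese sets). What you need is Lavrentiev's theorem --- empty interior \emph{and} connected complement --- or Bishop's analogue on open Riemann surfaces; you do arrange connected complement, so this is a matter of citing the right theorem, but as stated the step is wrong. Second, Theorem \ref{Lemma 13} is the \emph{harmonic} Runge--Carleman statement and cannot be quoted for holomorphic approximation; since your $K$ is compact, the classical Behnke--Stein Runge theorem already suffices and no Carleman-type machinery is needed. Finally, you are right that manufacturing a ``fat but thin'' $K$ (almost full measure, empty interior, connected and locally connected complement in $M^*$) is the technical heart; it is achieved by cutting countably many thin channels of small total measure from a dense family of points of $K$ out to $M\setminus K$, exactly as in the final corollary of the paper. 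With these corrections your argument closes, but the locally-constant-approximant route is shorter and is the one actually used in \cite{GS17}.
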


The following corollary is related to the previous considerations. 

\begin{corollary}
Let $K$ be a compact subset of $\C$ having connected complement and $g$ a function continuous on $K,$ holomorphic on $K^0$ and having no isolated zeros in $K^0.$ Then, there is a sequence of polynomials $p_j,$ zero-free on $K,$ such that $p_j\to g$ in measure on $K.$ 
\end{corollary}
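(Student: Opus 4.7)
My plan is to sidestep the obstruction that makes uniform approximation by zero-free polynomials conjectural (Conjecture \ref{Mergelyan fewer zeros}) by passing through the exponential: since $e^{h}$ is automatically zero-free on $\C$ for any entire $h$, once I can approximate $g$ in measure on $K$ by exponentials of entire functions, a routine Taylor truncation produces the desired zero-free polynomials.

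First, I define a bounded measurable ``logarithm'' of $g$ on $K$. At points $z\in K$ with $g(z)\neq 0$, set $L(z)=\log g(z)$ via the principal branch; at points with $g(z)=0$, set $L(z)=-M$ for some large real parameter $M$. Then $L$ is bounded and measurable, $e^{L}=g$ on $\{g\neq 0\}$, and on $\{g=0\}$ one has $e^{L}=e^{-M}$, which is close to $g=0$.

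Next I apply Theorem \ref{GS-complex} (with non-compact Riemann surface $\C$) separately to $\Re L$ and $\Im L$, and combine the two real-valued approximants to obtain entire functions $h_j$ with $h_j\to L$ in measure on $K$; since the theorem provides no quantitative rate I pass to a subsequence so that $m_2\{z\in K:|h_j(z)-L(z)|>1/j\}<1/j$. Setting $f_j:=e^{h_j}$ gives entire functions that are zero-free on all of $\C$. On the set where $|h_j-L|$ is small, the elementary bound $|e^{h_j}-e^{L}|\le|e^{L}|\,|h_j-L|\,e^{|h_j-L|}$ combined with $|e^{L}|\le\|g\|_\infty+e^{-M}$ and $|e^{L}-g|\le e^{-M}$ gives a small pointwise bound on $|f_j-g|$; a diagonalization in which $M_j\to\infty$ in coordination with the measure-approximation rate of $h_j$ then yields $f_j\to g$ in measure on $K$.

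Finally, I approximate each entire $f_j$ on the compact set $K$ by a Taylor polynomial. Since $f_j=e^{h_j}$ is entire and zero-free, $|f_j|=e^{\Re h_j}$ is continuous and strictly positive on $K$, so $c_j:=\min_{K}|f_j|>0$. I pick a Taylor polynomial $p_j$ of $f_j$ with $\|p_j-f_j\|_K<\min(c_j/2,1/j)$; then $|p_j|\ge c_j/2>0$ on $K$, so $p_j$ is zero-free on $K$, and combining $\|p_j-f_j\|_K<1/j$ with $f_j\to g$ in measure gives $p_j\to g$ in measure on $K$. I expect the main bookkeeping subtlety to be the simultaneous choice of $M$ and of the subsequence of $h_j$'s, handled by a diagonal extraction; it is also worth noting that this argument uses neither the connectedness of $\C\setminus K$ nor the no-isolated-zeros hypothesis, so it in fact works for any bounded measurable $g$ on a compact $K\subset\C$, with the stated hypotheses being natural only by analogy with Conjecture \ref{Mergelyan fewer zeros}.
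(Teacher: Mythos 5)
Your argument is correct, but it takes a genuinely different route from the paper's. The paper works directly with Mergelyan's theorem: it splits $K$ into the set $A_j$ where $|g|\le 1/j$ (on which $g$ is replaced by the constant $1/j$) and the set $B_j$ where $|g|\ge 2/j$, discards the thin transition zone together with some connecting rectangles so as to restore connectedness of the complement, and then uniformly approximates the resulting piecewise-defined function, which is bounded below in modulus, by a polynomial. You instead feed the real and imaginary parts of a measurable logarithm of $g$ into the Luzin-type Theorem \ref{GS-complex}, exponentiate to get entire zero-free approximants, and truncate Taylor series. Your route buys two things: the zero-freeness of $p_j$ on \emph{all} of $K$ comes for free from uniform closeness to the nowhere-vanishing entire function $e^{h_j}$ (the paper's Mergelyan step only bounds $|p_\epsilon|$ from below on the subset $K_\epsilon$), and, as you observe, the hypotheses that $\C\setminus K$ be connected and that $g$ have no isolated zeros become superfluous, which is consistent with the fact that Theorem \ref{GS-complex} itself needs no such hypotheses. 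Two small corrections. First, $L$ is not bounded in general, since $\Re L=\log|g|\to-\infty$ as one approaches the zero set of $g$ from within $\{g\ne 0\}$; this is harmless because Theorem \ref{GS-complex} requires only measurability, and your subsequent estimates use only the boundedness of $|e^{L}|$, which is at most $\max\{\|g\|_{K},e^{-M}\}$, not the boundedness of $L$. Second, when combining the two real-valued approximants you should say explicitly that $h_j=u_j+iw_j$ is again entire and that $|h_j-L|\le|u_j-\Re L|+|w_j-\Im L|$, so convergence in measure of each summand yields convergence in measure of the sum; this trivial remark is precisely where the real-valued statement of Theorem \ref{GS-complex} gets upgraded to the complex-valued one you need.
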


\begin{proof}
If $m_2(f^{-1}(0))=m_2(K),$ we may set $p_j=1/j.$ 

Suppose $m_2(f^{-1}(0)) < m_2(K)$ and fix $\epsilon>0.$ For each natural number $j$ define $A_j=\{z\in K: |g(z)|\le 1/j\}$ and $B_j=\{z\in K: |g(z)|\ge 2/j\}$. Fix $j$ sufficiently large so that $m_2(A_j\cup B_j)>m_2(K)-\epsilon.$ Let $\{U_k\}$ be an enumeration of the the complementary components of $A_j$ and $B_j$ and let $\{\delta_k\}$  be a sequence of positive numbers. For each $k,$ we let $R_k$ be an open rectangle connecting $U_k$ to $\C\setminus K$ with measure smaller than $\delta_k$ and we set $K_\epsilon=A_j\cup B_j\setminus \cup_k R_k.$ Then, $K_\epsilon$ is a compact subset of $K$ with connected complement and moreover $K\cap A_j$ and $K\cap B_j$ have connected complements. We choose the $\delta_k's$ so small that $m_2(K\setminus K_\epsilon)<2\epsilon.$ We define a function $g_\epsilon \in A(K_\epsilon)$ by setting $g_\epsilon = 1/j$ on $K_\epsilon\cap A_j$ and $g_\epsilon =g$ on $K_\epsilon$ on $K_\epsilon\cap B_j.$ By Mergelyan's theorem, there is a polynomial $p_\epsilon,$ such that $|p_\epsilon-g_\epsilon|<\epsilon/2$ on $K_\epsilon.$ 
We note that $|p_\epsilon|>\epsilon/2$ on $K_\epsilon.$ 
Now, choosing a sequence $\epsilon_n\to 0,$ we obtain a sequence $p_n$ of polynomials, zero-free on $K,$  which converges in measure to $g$ on $K.$ 
\end{proof}


\end{document}